\numberwithin{equation}{section}
\numberwithin{subsection}{section}
\newenvironment{enumeratei}
{\begin{enumerate}[\upshape (i)]}
{\end{enumerate}}
\newenvironment{enumerate1}
{\begin{enumerate}[\upshape (1)]}
{\end{enumerate}}
\newtheorem*{namedtheorem}{\theoremname}
\newcommand{\theoremname}{testing}
\newtheorem{theo}{Theorem}[section]
\newtheorem{prop}[theo]{Proposition}
\newtheorem{prop-def}[theo]
{Proposition-Definition}
\newtheorem{coro}[theo]{Corollary}
\newtheorem{lemm}[theo]{Lemma}
\newcommand \fr{\operatorname{F}}
\theoremstyle{definition}
\newtheorem{defi}[theo]{Definition}
\newtheorem{rema}[theo]{Remark}
\theoremstyle{remark}
\DeclareMathOperator{\Lie}{Lie}
\DeclareMathOperator{\IM}{Im}
\DeclareMathOperator{\Id}{Id}
\newcommand\cA{\mathcal{A}} 
 \newcommand\cD{\mathcal{D}}
\newcommand\cE{\mathcal{E}} 
 \newcommand\cH{\mathcal{H}}
\newcommand\cI{\mathcal{I}}
\renewcommand\AA{\mathbb{A}}
\newcommand\GG{\mathbb{G}}
 \newcommand\PP{\mathbb{P}}
\newcommand\mto{\ifinner\mapsto\else\longmapsto\fi}
\newcommand\too{\longrightarrow}
\newcommand \In{\subseteq}
\def\displaytimes_#1{\mathrel{\mathop{\times}\limits_{#1}}}
\def\displayotimes_#1{\mathrel{\mathop{\bigotimes}\limits_{#1}}}
\newcommand\GL{\operatorname{GL}}
\newcommand\PGL{\operatorname{PGL}}
\newcommand\spec{\operatorname{Spec}}
\newcommand\SL{\operatorname{SL}}
\newcommand\id{\mathrm{id}}
\newcommand{\benu}{\begin{enumerate}}
\newcommand{\eenu}{\end{enumerate}}
\DeclareMathOperator{\Hom}{Hom}
\newcommand{\on}{\stackrel}
\title{Unexpected subgroup schemes of $\PGL_{2,k}$ in characteristic $2$}
\author{Bianca Gouthier}
\author{Dajano Tossici}
\begin{document}
\maketitle
\begin{abstract}
    If the characteristic of a field $k$ is odd any infinitesimal group scheme of $\PGL_{2,k}$ lifts to $\SL_{2,k}$. In this paper, we prove that this is not true in characteristic $2$ and we give a complete description, up to isomorphism, of infinitesimal unipotent subgroup schemes of $\PGL_{2,k}$. Also, the infinitesimal trigonalizable case is considered. 
\end{abstract}

\section{Introduction}

In this paper, we are interested in finite subgroup schemes of $\PGL_{2,k}$.
In his paper \cite{Beauville}, Beauville classified, up to conjugacy, all finite subgroups of $\PGL_2(k)$ of order coprime with the characteristic. Here we are interested in the opposite case, infinitesimal subgroup schemes.
It seems to the authors that it is quite an accepted fact that any infinitesimal subgroup scheme of $\PGL_{2,k}$ lifts to $\GL_{2,k}$. In particular any unipotent infinitesimal sugbroup scheme of  $\PGL_{2,k}$ would be a subgroup scheme of $\GG_{a,k}$, and so it would be isomorphic to $\alpha_{p^n,k}$ for some $n\ge 0$. In this paper, we prove that this is not true if the characteristic of the field is $2$. The result is instead true if the characteristic is odd and 
we give a proof of it in the section \S \ref{Proof p>2}.

We recall that, for any field $k$, $\PGL_{2,k}$ represents the automorphism group functor of $\PP^1_k$. So the study of subgroup schemes corresponds to faithful actions on $\PP^1_k$.
Moreover $\PGL_{2,k}(k)$  coincides with the Cremona group in dimension one, i.e. birational morphisms of $\PP^1_k$, since any rational morphism from a projective nonsingular curve extends to the whole curve. In positive characteristic, the situation is completely different if we consider rational actions of infinitesimal group schemes. Most of the faithful infinitesimal actions of the affine line do not extend to $\PP^1_k$. For instance, all the actions of $\alpha_p^n$, with $n\ge 4$, over $\AA^1_k$ do not extend to $\PP^1_k$, since $\PGL_{2,k}$ has dimension $3$ and the Lie algebra of $\alpha_p^n$ has dimension $n$. See, for instance, \cite[Lemma 3.6]{brion2022actions} and \cite[Corollary 6.9]{gouthier2023infinitesimal}.

The main result of the paper is the following.



\begin{theo}
Let $k$ be a field of characteristic $2$.
\begin{enumerate1}
\item 
The infinitesimal unipotent subgroup schemes of $\PGL_{2,k}$ are, up to isomorphism, all and only the subgroup schemes of the semi-direct product $\alpha_{2^n,k}\rtimes \alpha_{2,k}$, with $n\ge 1$, where the action of $\alpha_{2,k}$ on $\alpha_{2^n,k}$ is given by $a\cdot b= b+ab^2$. 

\item If $k$ is perfect,
any infinitesimal trigonalizable, not unipotent, subgroup scheme of $\PGL_{2,k}$ is isomorphic to $\mu_{2^l,k}$ or to the semi-direct product of $\mu_{2^l,k}$, for some $l\ge 1$, by one of the two unipotent group schemes
\begin{enumeratei}
\item the semi-direct product  $\alpha_{2^n,k}\rtimes \alpha_2$, with $n\ge 1$, where the action of $\alpha_{2,k}$ on $\alpha_{2^n,k}$ is given by $a\cdot b= b+ab^2$
\item $\alpha_{2^n,k}$
\end{enumeratei}

for some nontrivial action of $\mu_{2^l,k}$.
\end{enumerate1}
\end{theo}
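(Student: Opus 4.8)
The plan is to identify an infinitesimal subgroup scheme $G\le\PGL_{2,k}$ through its Lie algebra, regarded as a restricted subalgebra of $\mathfrak{pgl}_2=\lie(\PGL_{2,k})=\aut(\PP^1_k)$. On the chart with coordinate $z$ one has $\mathfrak{pgl}_2=\langle e,h,f\rangle$ with $e=\partial_z$, $h=z\partial_z$, $f=z^2\partial_z$, so that $[h,e]=e$, $[h,f]=f$, $[e,f]=0$, and --- the phenomenon special to characteristic $2$ --- $e^{[2]}=f^{[2]}=0$ while $h^{[2]}=h$. Computing the $2$-operation on $D=(\alpha+\beta z+\gamma z^2)\partial_z$ gives $D^{[2]}=\beta D$, so the $p$-nilpotent elements are exactly the plane $\langle e,f\rangle$. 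Since the Lie algebra of an infinitesimal \emph{unipotent} group scheme is $p$-nilpotent, any unipotent $G$ satisfies $\lie(G)\In\langle e,f\rangle$, hence $\dim_k\lie(G)\le2$. For the easy inclusion in (1) I would realize the model explicitly: $b\mapsto U_b=\bigl(\begin{smallmatrix}1&b\\0&1\end{smallmatrix}\bigr)$ and $a\mapsto V_a=\bigl(\begin{smallmatrix}1&0\\a&1\end{smallmatrix}\bigr)$ embed $\alpha_{2^n,k}$ and $\alpha_{2,k}$, and the identity $V_aU_bV_a^{-1}=U_{b+ab^2}$ (valid since $a^2=0$) exhibits a copy of $\alpha_{2^n,k}\rtimes\alpha_{2,k}$ with the stated action; every subgroup scheme of it is then infinitesimal unipotent.

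For the converse in (1) I would split on $\dim_k\lie(G)$. If it is $0$, then $G$ is trivial. If it is $1$, a generator is a $p$-nilpotent field $(\alpha+\gamma z^2)\partial_z$ whose zero divisor on $\PP^1_k$ is supported at a single point $z_0$; this $z_0$ is the fixed locus of the Frobenius kernel $G_1$, and since $G_1$ is characteristic in $G$, all of $G$ fixes $z_0$. Thus $G$ lies in the Borel stabilizing $z_0$, and unipotency pushes $G$ into its unipotent radical $\ga$, giving $G\cong\alpha_{2^m,k}$, a subgroup scheme of the model. (Over imperfect $k$ one does the geometry over $\bar k$ and descends, checking that the only relevant forms are again of the listed type.)

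The heart of (1), and the step I expect to be the main obstacle, is $\lie(G)=\langle e,f\rangle$: here $e$ and $f$ have no common zero, so $G$ has no fixed point and the previous reduction fails. I would set $G^+=G\cap\ga$, the stabilizer of $\infty$; it is unipotent and fixes $\infty$, so $G^+\cong\alpha_{2^n,k}$ with $\lie(G^+)=\langle e\rangle$. The plan is to prove that $G^+$ is normal with $G/G^+\cong\alpha_{2,k}$ and to split this extension by a lower-triangular $\alpha_{2,k}$, whence $G=\alpha_{2^n,k}\rtimes\alpha_{2,k}$ via $V_aU_bV_a^{-1}=U_{b+ab^2}$. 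The real content is that the transverse ($f$-) direction of $G$ is forced to have height one: a lower-triangular element $V_a$ with $a^2\neq0$ conjugates a translation to a matrix with nonzero lower-left entry, so admitting such an element would generate a non-unipotent, three-dimensional configuration, contradicting unipotency. Controlling this, together with the extension of $G/G_1$ by the Frobenius kernel $G_1\cong\alpha_{2,k}\times\alpha_{2,k}$ (which I would organize as an induction on the height of $G$, using that $\langle e,f\rangle$ spans only a line of derivations over $k(z)$, so that $\PP^1_k/G_1\cong\PP^1_k$ by relative Frobenius), is where the argument must be pushed carefully so that one lands in the semidirect product and not in a larger group scheme.

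For (2), with $k$ perfect and $G$ infinitesimal trigonalizable but not unipotent, I would invoke the structure of trigonalizable group schemes over a perfect field to write $G=G_u\rtimes M$ with $G_u$ infinitesimal unipotent and $M$ of multiplicative type. As $M$ is diagonalizable it is conjugate into the diagonal torus $\gm\subset\PGL_{2,k}$; being infinitesimal of rank $\le1$ it is $M\cong\mu_{2^l,k}$, and non-unipotence forces $l\ge1$. Now $G_u$ is an $M$-stable infinitesimal unipotent subgroup scheme, so $\lie(G_u)$ is an $M$-stable subspace of $\langle e,f\rangle$, on which the torus acts through the two opposite weights. Enumerating the stable possibilities ($0$, a line, or $\langle e,f\rangle$) and identifying $G_u$ in each case by (1) yields $G_u\in\{0,\ \alpha_{2^n,k},\ \alpha_{2^n,k}\rtimes\alpha_{2,k}\}$, hence the three asserted shapes $\mu_{2^l,k}$, $\alpha_{2^n,k}\rtimes\mu_{2^l,k}$, and $(\alpha_{2^n,k}\rtimes\alpha_{2,k})\rtimes\mu_{2^l,k}$. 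It remains to check that the $M$-action is nontrivial (otherwise $G$ degenerates to a product that is unipotent or purely multiplicative) and that in the two-dimensional case the $M$-stable $G_u$ is the \emph{full} semidirect product $\alpha_{2^n,k}\rtimes\alpha_{2,k}$; this last saturation is again the crux of (1), now carried out in the presence of the normalizing torus.
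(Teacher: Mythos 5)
Your ``easy inclusion'' (the explicit matrices $U_b=\left(\begin{smallmatrix}1&b\\0&1\end{smallmatrix}\right)$, $V_a=\left(\begin{smallmatrix}1&0\\a&1\end{smallmatrix}\right)$ with $a^2=0$, and the identity $V_aU_bV_a^{-1}=U_{b+ab^2}$ in $\PGL_2$) is correct and is in fact a slicker route to the embedding of $\alpha_{2^n,k}\rtimes\alpha_{2,k}$ than the paper's, which obtains it as a by-product of a Hopf-algebra computation. The converse, however, contains a genuine error. In the case $\dim_k\Lie(G)=1$ you conclude $G\cong\alpha_{2^m,k}$, and this conclusion is \emph{false}: the paper's own classification (Lemma \ref{lem:subgroups Gn} together with the Theorem, and the Corollary in the introduction) produces the subgroup schemes $\cH_{a,n}\subset\cD_n\subset\PGL_{2,k}$ with $a\neq 0$, which are unipotent, infinitesimal, with one-dimensional Lie algebra, and yet not isomorphic to any $\alpha_{2^m,k}$ --- for $n=2$ this is a (twisted form of) the $2$-torsion of a supersingular elliptic curve, on which the Verschiebung is nonzero, and for $n\ge 3$ these groups are even noncommutative. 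The step that fails is precisely ``since $G_1$ is characteristic in $G$, all of $G$ fixes $z_0$'': what $G$ preserves is the fixed-point \emph{scheme} of $G_1$, i.e.\ the non-reduced length-two divisor $V(D)=2z_0$, not its reduced point. The automorphism group scheme of $\spec k[\epsilon]/(\epsilon^2)$ is $\alpha_{2,k}\rtimes\GG_{m,k}$, and the $\alpha_{2,k}$-part moves the closed point; an infinitesimal group has no reason to preserve reduced structures. Indeed $\cH_{a,2}$ with $a\neq0$ fixes no point of $\PP^1_k$ at all (if it did, it would land in the unipotent radical of a Borel, hence be $\alpha_{4,k}$). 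This error propagates to your part (2): when you enumerate $G_u$ ``by (1)'' you omit the possibility $G_u\cong\cH_{a,n}$, $a\neq0$, which must instead be excluded by a separate argument --- the paper does this via Lemma \ref{lem:isomorphisms H_al}(3), showing multiplicative-type groups admit no nontrivial action on $\cH_{a,n}$ for $a\neq 0$.

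The case $\dim_k\Lie(G)=2$, which you rightly identify as the heart, is a plan rather than a proof: the normality of $G^+=G\cap\GG_{a,k}$, the identification $G/G^+\cong\alpha_{2,k}$, and the splitting of the resulting extension are all asserted, not established, and your justification for the crux (``a lower-triangular element $V_a$ with $a^2\neq0$ conjugates a translation to\dots'') is element-wise reasoning that has no meaning for infinitesimal group schemes, whose only $k$-point is the identity; nor is there any a priori reason the ``transverse direction'' of $G$ consists of lower-triangular points in the chosen coordinates. The paper avoids both pitfalls by never working with fixed points inside $\PGL_{2,k}$: it pulls $G$ back along $\pi:\SL_{2,k}\to\PGL_{2,k}$, uses the snake lemma and the classification of unipotent subgroup schemes of $\SL_{2,k}$ (Proposition \ref{prop: subgroups SL2}) to trap $\tilde G=\pi^{-1}(G)$ inside the pull-back $\fr_{\SL_{2,k}}^{-1}(H_{s_1,s_2,n})$, and then computes the quotient $\fr_{\SL_{2,k}}^{-1}(H_{s_1,s_2,n})/\mu_{2,k}\cong\cD_n$ explicitly via $\mu_{2,k}$-invariants of the Hopf algebra; both Lie-algebra dimensions are then handled by the single statement $G\subseteq\cD_n$, which is exactly what sidesteps the false dichotomy ``one-dimensional $\Rightarrow$ $\alpha_{2^m,k}$''.
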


An explicit description of all these group schemes will be given in the section \S\ref{sec: noncommutative group schemes}. 
While the above Theorem gives a complete classification of infinitesimal unipotent subgroup schemes of $\PGL_{2,k}$, for trigonalizable group schemes 
we do not know if, for any nontrivial action of $\mu_{2^l,k}$ over the unipotent group schemes in $(i)$, the associated semi-direct product acts faithfully on $\PP^1_k$. In the section \S \ref{sec: trigonalizable}, we prove that there exists at least one action of $\mu_{2^l,k}$ over any unipotent group scheme which appears in $(i)$ such that the associated semi-direct product acts faithfully on $\PP^1_k$.
In the commutative case,  we get a complete classification over an algebraically closed field.


\begin{coro}
Let $k$ be an algebraically closed field of characteristic $2$.
The list of infinitesimal commutative subgroup schemes of $\PGL_{2,k}$, up to isomorphism, is the following:
\begin{enumerate1}
\item $\alpha_{2^n,k}$, for some $n\ge 0$,
\item $\alpha_{2,k}\times \alpha_{2,k}$,
\item the $2$-torsion of a supersingular elliptic curve,
\item $\mu_{2^n}$, for some $n>0$.
\end{enumerate1}
\end{coro}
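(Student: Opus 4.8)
The plan is to derive the Corollary from the Theorem. Let $G\In\PGL_{2,k}$ be a commutative infinitesimal subgroup scheme. Since $k$ is algebraically closed (in particular perfect), the structure theory of commutative affine group schemes over a perfect field gives a canonical splitting $G\cong M\times U$ with $M$ infinitesimal of multiplicative type and $U$ infinitesimal unipotent. As $M$ is of multiplicative type, after conjugation it lies in a maximal torus $T\cong\GG_{\rmm,k}$ of $\PGL_{2,k}$; being then a subgroup scheme of $\GG_{\rmm,k}$, it must be $M\cong\mu_{2^l,k}$ for some $l\ge0$.

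Next I would rule out the genuinely mixed case. If $M$ and $U$ were both nontrivial, then $G$ would be commutative, not unipotent, and trigonalizable (a commutative, hence solvable, connected subgroup scheme lies in a Borel), so part (2) of the Theorem would apply. But that part describes such a group as either $\mu_{2^l,k}$---impossible since $U\neq0$---or a semidirect product of some $\mu_{2^{l'},k}$ by a unipotent group scheme for a \emph{nontrivial} action, which is noncommutative and so cannot be isomorphic to the commutative $G$. Hence $M$ or $U$ is trivial. The case $U=0$ yields $G\cong\mu_{2^l,k}$, which is item (4) (with $l>0$; $l=0$ is the trivial group, i.e.\ item (1) with $n=0$), and it remains to classify the unipotent $G=U$.

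For unipotent $G$, part (1) of the Theorem lets me assume $G$ is a subgroup scheme of $H_n:=\alpha_{2^n,k}\rtimes\alpha_{2,k}$, with coordinate $b$ on $\alpha_{2^n,k}$, coordinate $a$ on $\alpha_{2,k}$, and multiplication $(b_1,a_1)(b_2,a_2)=(b_1+b_2+a_1b_2^2,\,a_1+a_2)$. Comparing $(b_1,a_1)(b_2,a_2)$ with $(b_2,a_2)(b_1,a_1)$ shows that two points commute exactly when $a_1b_2^2+a_2b_1^2=0$, so $G$ is commutative if and only if $\bar a\pt\bar b^2+\bar b^2\pt\bar a=0$ in $\cO(G)\pt_k\cO(G)$, where $\bar a,\bar b$ are the images of the coordinates. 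Since a relation $u\pt w+w\pt u=0$ forces $u,w$ to be linearly dependent, I split into cases. If $\bar b^2=0$ then $G\In\{b^2=a^2=0\}$, on which the action is trivial, so $G$ is a subgroup scheme of $\alpha_{2,k}\times\alpha_{2,k}$, giving item (1) or (2). If $\bar a=0$ then $G\In\alpha_{2^n,k}$, and because every subgroup scheme of $\alpha_{2^n,k}$ is some $\alpha_{2^m,k}$ (its Dieudonn\'e module $k[F]/(F^n)$ is uniserial), we land in item (1). In the remaining case $\bar a=c\,\bar b^2$ with $c\neq0$ and $\bar b^2\neq0$; then $a^2=0$ forces $b^4=0$, and $G$ is the twisted subgroup $C_c=\{(b,cb^2):b^4=0\}$, a local--local group scheme of order $4$ with nontrivial Frobenius.

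Finally I would identify $C_c$. The idea is to show it is the last local--local commutative group scheme of order $4$: its group law $b_1+b_2+c\,b_1^2b_2^2$ admits no additive coordinate, equivalently $\dim_k\Hom(C_c,\GG_{a,k})=1$, whereas both $\alpha_{4,k}$ and $\alpha_{2,k}\times\alpha_{2,k}$ have a $2$-dimensional space of homomorphisms to $\GG_{a,k}$; thus $C_c$ has nontrivial Frobenius \emph{and} Verschiebung and is self-dual, which characterizes the $2$-torsion of a supersingular elliptic curve, i.e.\ item (3). Existence of all four items is easy: the tori furnish $\mu_{2^l,k}$, while $\alpha_{2,k}\times\alpha_{2,k}$ and $C_c\cong E[2]$ already sit inside $H_1$ and $H_2$ and hence, by part (1) of the Theorem, inside $\PGL_{2,k}$. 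I expect the main obstacle to be exactly this identification of $C_c$ with a supersingular $E[2]$---it rests on the classification of order-$4$ infinitesimal unipotent commutative group schemes and on the slightly delicate point that $(\alpha_{4,k})^\vee$ does \emph{not} embed in $\PGL_{2,k}$---together with making the passage from commutativity of $G$ to the tensor relation $\bar a\pt\bar b^2+\bar b^2\pt\bar a=0$ fully rigorous at the level of Hopf algebras.
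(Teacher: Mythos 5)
Your proposal is correct, and its core computation takes a genuinely different route from the paper's. The paper's proof of the Corollary is a one-line deduction, but the machinery behind it is Lemma~\ref{lem:subgroups Gn} (every closed subgroup scheme of $\cD_n=\alpha_{2^n,k}\rtimes\alpha_{2,k}$ is some $\cD_l$ or some $\cH_{a,m}$, proved by computing all morphisms $\cD_n\to\alpha_{2,k}$) combined with Lemma~\ref{lem:commutative subgroups}, which singles out the commutative ones and identifies $\cH_{a,2}$, $a\neq 0$, with (a form of) the $2$-torsion of a supersingular elliptic curve by an explicit isomorphism, after a cube-root rescaling, with $\ker(\fr+V\colon W_{2,k}\to W_{2,k})$. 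You bypass the classification of \emph{all} subgroup schemes: commutativity of $G\In\cD_n$ is equivalent to the relation $\overline{a}\otimes\overline{b}^2=\overline{b}^2\otimes\overline{a}$ in $\cO(G)\otimes\cO(G)$, hence to proportionality of $\overline{a}$ and $\overline{b}^2$, and your three cases produce exactly $\alpha_{2,k}\times\alpha_{2,k}$, $\alpha_{2^m,k}$, and $C_c$ --- which is precisely the paper's $\cH_{c,2}$. You then identify $C_c$ with $E[2]$ abstractly, via the Dieudonn\'e classification of order-$4$ local-local commutative group schemes together with $\fr_{C_c}\neq 0$ and the count $\dim_k\Hom(C_c,\GG_{a,k})=1$ (indeed only $T^2$ is primitive for the comultiplication $T\mapsto T\otimes 1+1\otimes T+cT^2\otimes T^2$, so this count is correct and rules out $\alpha_{4,k}$ and $\alpha_{2,k}\times\alpha_{2,k}$, while $\fr\neq 0$ rules out $(\alpha_{4,k})^\vee$). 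What each approach buys: yours is shorter and does not need the paper's subgroup classification, at the price of invoking Dieudonn\'e theory as a black box; the paper's lemmas do more work but give explicit models and are reused for the noncommutative part of the Theorem.

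Three minor corrections. First, your parenthetical justification of trigonalizability (``a commutative connected subgroup lies in a Borel'') is not a valid argument for non-smooth group schemes; but it is also unnecessary, since $G\cong M\times U$ with $M$ diagonalizable (as $k$ is algebraically closed) is trigonalizable by definition. Second, conjugating $M$ into a maximal torus is true but requires SGA3-type input (smoothness of centralizers of multiplicative subgroups, structure of Cartan subgroups); you can avoid it altogether by applying part (2) of the Theorem to $G$ itself, which forces $G\cong\mu_{2^l,k}$ whenever $M\neq 0$ --- the same observation you already use to kill the mixed case. Third, the non-embedding of $(\alpha_{4,k})^\vee$ in $\PGL_{2,k}$ is an output of the classification, not an input your identification of $C_c$ rests on: $C_c\not\cong(\alpha_{4,k})^\vee$ follows already from $\fr_{C_c}\neq 0$.
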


The corollary follows from the Theorem using the Lemma \ref{lem:commutative subgroups}.

\subsection*{Acknowledgements:} The authors are deeply grateful to Fabio Bernasconi, Alice Bouillet, Andrea Fanelli, Pascal Fong and Matthieu Romagny for very useful conversations.

\section{Infinitesimal subgroup schemes of $\PGL_{2,k}$ in characteristic $p>2$} \label{Proof p>2} Let $k$ be a field of characteristic $p$.
Let $G$ be an infinitesimal subgroup scheme of $\PGL_{2,k}$. Since $\SL_{2,k}\to \PGL_{2,k}$ is an étale covering then $G$ lifts to $\SL_{2,k}$. This result is known and, for instance, it is mentioned in Fakhruddin \cite{Fakhruddin}.
We however report here the details of the proof, which are not present in the aforementioned paper. We have the following lemma.

\begin{lemm}
Any extension of group schemes
$$
1\to H\to G\on{\pi}{\to} Q\to 1,
$$
such that $H$ is étale and commutative and $Q$ is infinitesimal,
 is trivial and $G$ is isomorphic to the direct product $H\times Q$.
\end{lemm}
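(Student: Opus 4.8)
The plan is to exploit the canonical connected--étale sequence of $G$ and show that, after identifications, it coincides with the given extension with the roles reversed. Write $G^0$ for the connected component of the identity, so that we have $1\to G^0\to G\to \pi_0(G)\to 1$ with $G^0$ infinitesimal and $\pi_0(G)$ étale. The first observation is that $H\cap G^0=1$: a subgroup scheme of $H$ is étale while a subgroup scheme of $G^0$ is connected, and over a field a group scheme that is simultaneously étale and connected is trivial (it is $\spec$ of a local reduced finite $k$-algebra with a $k$-point). In particular $H\hookrightarrow G\to\pi_0(G)$ is a monomorphism, and the restriction $\pi|_{G^0}\colon G^0\to Q$ has kernel $H\cap G^0=1$, hence is a closed immersion.

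Next I would prove that $\pi|_{G^0}\colon G^0\to Q$ is in fact an isomorphism. Injectivity being established, it remains to check surjectivity. Let $N=\IM(\pi|_{G^0})\subseteq Q$. The composite $G\xarr{\pi}Q\to Q/N$ kills $G^0$, hence factors through $\pi_0(G)=G/G^0$, so $Q/N$ is a quotient of an étale group scheme and is therefore étale; on the other hand $Q/N$ is a quotient of the infinitesimal group scheme $Q$, hence infinitesimal. Being both étale and infinitesimal, $Q/N$ is trivial, so $N=Q$ and $\pi|_{G^0}$ is an isomorphism. This already splits the extension, with section $s=(\pi|_{G^0})^{-1}\colon Q\to G^0\subseteq G$; comparing orders via $|G|=|H|\,|Q|=|G^0|\,|\pi_0(G)|$ together with $G^0\cong Q$ gives $|H|\,|G^0|=|G|$ (and incidentally $H\cong\pi_0(G)$, so the two extensions really are mirror images).

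To upgrade the splitting to a direct product it suffices to show that $H$ and $G^0$ commute inside $G$. Both are normal subgroup schemes of $G$ --- $H$ as the kernel of $\pi$, and $G^0$ as a characteristic subgroup --- and they meet trivially. The commutator morphism $H\times G^0\to G$, $(h,g)\mapsto h g h^{-1}g^{-1}$, then takes values in $H\cap G^0=1$: on $R$-points, normality of $H$ puts the commutator in $H(R)$ and normality of $G^0$ puts it in $G^0(R)$, so it is the constant identity. Hence $H$ and $G^0$ commute, the multiplication $H\times G^0\to G$ is a homomorphism with trivial kernel, thus a closed immersion, and by the order count $|H|\,|G^0|=|G|$ its image is all of $G$. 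Composing with $G^0\xarr{\ \sim\ }Q$ yields $G\cong H\times G^0\cong H\times Q$.

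I expect the surjectivity of $\pi|_{G^0}$ to be the crux. Topological surjectivity is immediate since $Q$ has a single point, but a proper subgroup scheme can share the same support (as with $\alpha_{p}\subset\alpha_{p^2}$), so one genuinely needs the ``both étale and infinitesimal hence trivial'' argument applied to $Q/N$ rather than a naive support or dimension count. Everything else is formal; note in particular that the argument never uses perfectness of $k$, because the splitting section is produced canonically from $G^0$ rather than by a choice.
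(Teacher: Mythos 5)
Your proof is correct, but it follows a genuinely different route from the paper's. The paper first observes that any action of $Q$ on $H$ is trivial, then produces a scheme-theoretic (not group-theoretic) section of $\pi$ --- since $H$ is étale, $\pi$ is an étale morphism, and the identity point of the infinitesimal scheme $Q$ lifts to $G$ by the infinitesimal lifting property --- and finally invokes the Hochschild cohomology classification of extensions admitting schematic sections: $H_0^2(Q,H)=0$ because any scheme morphism from the one-point scheme $Q\times Q$ to the étale scheme $H$ is constant. You avoid cohomology altogether: you identify the given extension with the connected-étale sequence of $G$, proving that $\pi|_{G^0}\colon G^0\to Q$ is an isomorphism (trivial kernel because an étale group scheme meets an infinitesimal one trivially; surjectivity because $Q/\pi(G^0)$ is both étale and infinitesimal, hence trivial), which yields a canonical group-theoretic splitting, and you then pass from semidirect to direct product via the commutator argument for two normal subgroups intersecting trivially --- this step plays exactly the role of the paper's triviality-of-the-action observation. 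Your argument is more elementary and self-contained, and the section it produces is canonical (since $G^0$ is characteristic); the paper's argument is shorter given the machinery of Demazure--Gabriel and sits inside a framework that classifies all extensions admitting schematic sections, not only the split ones. Two details are worth making explicit in your write-up: the claim that every subgroup scheme of $G^0$ is connected uses that $G^0$ is finite and local (infinitesimal), not merely connected (compare $\mu_n\subset\GG_{m,k}$, where it fails); and étaleness of the quotient $Q/N$ is cleanest to verify after base change to an algebraic closure, where finite étale group schemes and their quotients are constant.
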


\begin{proof}
First of all, we observe that any action of $Q$ on $H$ is trivial.
Then the result follows from \cite[III \S 6, n.7]{DG}. 
We give here the proof in this simpler case.
Since $H$ is étale and $Q$ is infinitesimal then there is a schematic section of $\pi$. So all such extensions are classified by the Hochschild cohomology group $H_0^2(Q,H)$
(see \cite[III \S 6, n.7]{DG}).  Since $Q$ is infinitesimal and $H$ is étale any scheme morphism from $Q^2\to H$ is constant, so $H_0^2(Q,H)$ is trivial.
\end{proof}

\begin{prop}
If $k$ is a field of characteristic $p$ and $p$ does not divide $n$ then any infinitesimal subgroup scheme of $\PGL_{n,k}$ lifts to $\SL_{n,k}$.
\end{prop}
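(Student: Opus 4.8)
The plan is to realize $G \subseteq \PGL_{n,k}$ as the base of an extension by the kernel of the canonical isogeny $\pi\colon \SL_{n,k} \to \PGL_{n,k}$, and then to split that extension using the Lemma proved above. Recall that $\PGL_{n,k} = \GL_{n,k}/\gm$ and that the composite $\SL_{n,k} \hookrightarrow \GL_{n,k} \to \PGL_{n,k}$ is faithfully flat with kernel the scheme of scalar matrices $\lambda\,\Id$ satisfying $\lambda^n = 1$, that is, the center $\mu_{n,k}$ of $\SL_{n,k}$. This yields a central extension
$$1 \to \mu_{n,k} \to \SL_{n,k} \xrightarrow{\pi} \PGL_{n,k} \to 1.$$
The first thing I would record is that $\mu_{n,k} = \spec k[t]/(t^n-1)$ is étale over $k$: since $p \nmid n$, the polynomial $t^n-1$ is separable, as its derivative $n\,t^{n-1}$ is a unit multiple of $t^{n-1}$ and shares no root with $t^n-1$. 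Hence $\mu_{n,k}$ is finite étale and, being of multiplicative type, commutative.

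Next I would pull this extension back along the inclusion $G \hookrightarrow \PGL_{n,k}$. Concretely, set $\tilde G = \pi^{-1}(G)$, the scheme-theoretic preimage; this is a finite subgroup scheme of $\SL_{n,k}$ fitting into a central extension
$$1 \to \mu_{n,k} \to \tilde G \to G \to 1.$$
Here $\mu_{n,k}$ is étale and commutative while $G$ is infinitesimal by hypothesis, so this is exactly the situation covered by the Lemma. Applying it gives $\tilde G \cong \mu_{n,k} \times G$ as group schemes, compatibly with the projection onto $G$; in particular the second factor furnishes a section $s\colon G \to \tilde G \subseteq \SL_{n,k}$ of $\pi|_{\tilde G}$. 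This $s$ is a closed immersion whose image maps isomorphically onto $G$ under $\pi$, which is precisely a lift of $G$ to $\SL_{n,k}$.

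The argument is therefore essentially formal once the Lemma is in place, so the points that genuinely require care — rather than a real obstacle — are the two inputs that make the Lemma applicable: that $\ker\pi$ really is $\mu_{n,k}$, and that this kernel is étale. The former is a direct computation with scalar matrices in $\SL_{n,k}$, while the latter reduces to the separability of $t^n-1$, which is exactly where the hypothesis $p \nmid n$ enters. I expect this étaleness step to be the crux: it is precisely what fails for $\PGL_{2,k}$ in characteristic $2$, where $\mu_{2}$ is infinitesimal rather than étale, and that failure is what drives the unexpected phenomena studied in the rest of the paper.
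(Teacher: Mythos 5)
Your proposal is correct and follows essentially the same route as the paper: pull back the central extension $1 \to \mu_{n,k} \to \SL_{n,k} \to \PGL_{n,k} \to 1$ along $G \hookrightarrow \PGL_{n,k}$, observe that $p \nmid n$ makes $\mu_{n,k}$ étale, and apply the splitting Lemma to the resulting extension $1 \to \mu_{n,k} \to \pi^{-1}(G) \to G \to 1$ to obtain the lift. The only difference is that you spell out the verification of étaleness and the identification of $\ker\pi$, which the paper leaves implicit.
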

\begin{proof}
We consider the exact sequence
$$
1\to \mu_{n,k} \to \SL_{n,k} \on{\pi}{\to} \PGL_{n,k}\to 1.
$$
Since $p$ does not divide $n$ then $\mu_{n,k}$ is an étale group scheme. Let $G$ be an infinitesimal subgroup scheme of $\PGL_{n,k}$. Then $\pi^{-1}G$ is an extension of $G$ by $\mu_{n,k}$. By the previous Lemma the extension is trivial, so $G$ lifts to $\SL_{n,k}$.
\end{proof}
In particular the above Proposition applies when $n=2$ and $p>2$. 
\section{Infinitesimal unipotent subgroups schemes of $\GL_{2,k}$}
In this subsection, we give an explicit description of all infinitesimal unipotent subgroup schemes of $\GL_{2,k}$, where $k$ is a field of positive characteristic $p$.  The following result will be used in the proof of the Theorem. 

\begin{prop}\label{prop: subgroups SL2}
Any infinitesimal unipotent subgroup scheme of $\GL_{2,k}$ is one of the following subgroup schemes of $\SL_{2,k}$
$$
H_{s_1,s_2,n}=\{\left( \begin{smallmatrix}  x_{11} &x_{12} \\ x_{21} & x_{22}  \end{smallmatrix} \right)\in \ker \fr^n_{\SL_{2,k}}| s_i (x_{ii}-1)+s_j(x_{ij})=0, x_{22}=2-x_{11} \text{ for } (i,j)=(1,2),(2,1) \},
$$
for some $[s_1:s_2]\in \PP^1(k)$ and $n\ge 1$.
\end{prop}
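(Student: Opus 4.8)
The plan is to show that any infinitesimal unipotent $G\subseteq\GL_{2,k}$ is forced into the unipotent radical of a Borel subgroup of $\SL_{2,k}$, and then to compute the infinitesimal subgroup schemes of $\GG_{a,k}$ explicitly. First I would reduce to $\SL_{2,k}$: the determinant restricts to a homomorphism $\det\colon G\to\GG_m$, and since $G$ is unipotent while $\GG_m$ is of multiplicative type this homomorphism is trivial, so $G\subseteq\SL_{2,k}$. As $G$ is infinitesimal there is an integer $n\ge 1$ with $\fr^n$ vanishing on $G$, i.e. $G\subseteq\ker\fr^n_{\SL_{2,k}}$, and I will take $n$ to be the height of $G$.

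The geometric heart of the argument is to produce a $G$-fixed line defined over $k$. Let $V=k^2$ be the standard representation. Since $G$ is unipotent and (for a nontrivial $G$) acts faithfully on $V$, the invariant subspace $V^G$ is a nonzero proper $k$-subspace, hence a line $L=[s_1:s_2]$ with $[s_1:s_2]\in\PP^1(k)$. Thus $G$ is contained in the Borel subgroup $B\subseteq\SL_{2,k}$ stabilising $L$. The action of $B$ on $L$ defines a character $B\to\GG_m$, and its restriction to $G$ is again a homomorphism from a unipotent group scheme to a torus, hence trivial; so $G$ acts trivially on $L$, and therefore also on $V/L$. Consequently $G$ lies in the unipotent radical $U\cong\GG_{a,k}$ of $B$, that is, in the matrices of determinant $1$ and trace $2$ fixing the line $[s_1:s_2]$.

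It then remains to identify $G$ inside $U\cong\GG_{a,k}$. Here I would classify the infinitesimal subgroup schemes of $\GG_{a,k}$ directly: writing $\GG_{a,k}=\spec k[x]$, the ideal defining $G$ is principal since $k[x]$ is a principal ideal domain, say $(f)$, and $\mathcal{O}(G)=k[x]/(f)$ is a finite local $k$-algebra with residue field $k$ whose maximal ideal is generated by the image of $x$; this forces $f=x^{p^n}$ up to a scalar, with $p^n=|G|$, so that $G=\ker\fr^n_U\cong\alpha_{p^n}$. By functoriality of Frobenius one has $\ker\fr^n_U=U\cap\ker\fr^n_{\SL_{2,k}}$, which is precisely the subscheme cut out inside $\ker\fr^n_{\SL_{2,k}}$ by the conditions that the matrix fix $[s_1:s_2]$ and have trace $2$. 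Translating these into coordinates — $s_i(x_{ii}-1)+s_j x_{ij}=0$ for $(i,j)=(1,2),(2,1)$ (the vanishing of $(x-\Id)\left(\begin{smallmatrix}s_1\\ s_2\end{smallmatrix}\right)$) together with $x_{22}=2-x_{11}$ — gives exactly the equations defining $H_{s_1,s_2,n}$, so $G=H_{s_1,s_2,n}$.

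I expect the main obstacle to be the passage to the unipotent radical: one must be sure that unipotence yields an invariant line defined over $k$ rather than merely geometrically, and that the induced character on that line is trivial, so that $G$ genuinely lands in a copy of $\GG_{a,k}$ and not just in the full Borel $B$. Once this is secured, the remaining points — that $G$ is the \emph{full} Frobenius kernel of $\GG_{a,k}$ of its height, and the routine translation of ``fixes $[s_1:s_2]$ and has trace $2$'' into the stated linear relations — present no real difficulty.
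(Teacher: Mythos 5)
Your proof is correct, but its core runs along a genuinely different line from the paper's. Both arguments share the opening reductions: the determinant is trivial on a unipotent group scheme, so $G\In \SL_{2,k}$, and unipotence produces a $k$-rational fixed line (the paper phrases this as ``up to conjugation, $G$ lies in the upper triangular unipotent matrices''). From there the paper works with abstract maps: it concludes $G\simeq\alpha_{p^n,k}$ and then classifies \emph{all} homomorphisms $\alpha_{p^n,k}\to\SL_{2,k}$ by a direct Hopf-algebra computation --- expanding $A(\overline{T})=\sum A_i\overline{T}^i$, deriving $\binom{i+j}{j}A_{i+j}=A_iA_j$, showing $A_i^p=0$ and that the $A_i$ pairwise commute, and invoking the fact that commuting nilpotent $2\times 2$ matrices are proportional --- to see that the image of any such map lands in some $H_{s_1,s_2,n}$, together with an explicit isomorphism $\alpha_{p^n,k}\simeq H_{s_1,s_2,n}$. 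You never classify maps: you localize $G$ inside the $k$-rational unipotent radical $U\simeq\GG_{a,k}$ of the Borel stabilizing the fixed line (triviality of characters of unipotent groups gives trivial action on $L$ and $V/L$), identify the infinitesimal subgroup schemes of $\GG_{a,k}$ as the Frobenius kernels, and conclude $G=\ker\fr^n_U=U\cap\ker\fr^n_{\SL_{2,k}}=H_{s_1,s_2,n}$. Your route is shorter, avoids the commuting-nilpotent and power-series computations, and directly yields the \emph{equality} $G=H_{s_1,s_2,n}$ (the paper gets containment in some $H_{s_1,s_2,n}$ plus the separate isomorphism $H_{s_1,s_2,n}\simeq\alpha_{p^n,k}$); it also gives the converse for free, since $H_{s_1,s_2,n}=\ker\fr^n_U$ is visibly a subgroup scheme isomorphic to $\alpha_{p^n,k}$. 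What the paper's computation buys is the explicit parametrization of $\Hom_{k\text{-}gr}(\alpha_{p^n,k},\SL_{2,k})$, which is finer information than the proposition itself.

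Two small points. First, in your classification of subgroups of $\GG_{a,k}$, the local-ring/PID argument alone only gives $f=cx^m$; the jump to $m=p^n$ needs the group structure --- either the Hopf-ideal condition, where $(x\otimes 1+1\otimes x)^m\in(x^m\otimes 1,\,1\otimes x^m)$ forces all middle binomial coefficients $\binom{m}{j}$ to vanish mod $p$, hence $m$ is a $p$-power, or the standard fact that infinitesimal group schemes have $p$-power order. Second, the ``main obstacle'' you flag is a non-issue: the fixed subspace $V^G$ is by construction the kernel of the $k$-linear map $v\mapsto\rho(v)-v\otimes 1$, hence defined over $k$, and its nonvanishing is exactly the standard fixed-vector characterization of unipotence.
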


\begin{proof}
Any infinitesimal unipotent subgroup scheme of $\GL_{2,k}$ is isomorphic to $\alpha_{p^n,k}$, for some $n$, since, up to conjugation, it is contained in the subgroup of upper triangular unipotent matrices, which is isomorphic to $\GG_{a,k}$.
Moreover any unipotent subgroup scheme $H$ of $\GL_{2,k}$  is contained in $\SL_{2,k}$ since, over a field, any map from a unipotent group scheme to a diagonalizable group scheme is trivial, so the restriction of the determinant to $H$ is trivial. We now observe that
$$
\Hom_{k-gr}(\alpha_{p^n,k}, \SL_{2,k})\In \SL_{2,k}(k[T]/(T^{p^n})) 
$$
and it consists of matrices 
$A(\overline{T})$ such that $A(\overline{S+T})=A(\overline{S})A(\overline{T}) \in \SL_{2,k}(k[S,T]/(S^{p^n},T^{p^n})$.
Since $A(0)=\Id$ we have that 
$$
A(\overline{T})=\sum_{i=0}^{p^n-1} A_i \overline{T}^i
$$
with $A_i\in M_2(k)$ for any $0\le i\le p^{n}-1$ and $A_0=\Id$.
Now
$$
A(\overline{S+T})=\sum_{i=0}^{p^n-1}\sum_{j=0}^i\binom{i}{j} A_i\overline{S}^j\overline{T}^{i-j} 
$$
and
$$
A(\overline{S})A(\overline{T})=\sum_{i,j=0}^{p^n-1}A_iA_{j}\overline{S}^i\overline{T}^{j}.
$$
Therefore for any $0\le i,j <p^n$ we have that
$$
\binom{i+j}{j} A_{i+j}=A_iA_{j}
$$
where we set $A_k=0$ if $k\ge p^n$.
Then 
$$
A_{i}^p=\binom{2i}{i}\cdots  \binom{pi}{i} A_{pi}
= \frac{(pi)!}{(i!)^p}A_{pi}$$ for any $0\le i\le p^n-1$. 
Now
$$
v_p\Big(\frac{(pi)!}{{(i!)}^p}\Big)=i-(p-1)v_p(i!)>0,
$$
so
$$
A_i^p=0.
$$
Moreover $A_{i}$ commutes with $A_{j}$ for any $0\le i,j \le p^n-1$. 
It is known that if two nilpotent matrices of rank $2$ commute then one is a multiple of the other.
Therefore there exists a nilpotent matrix $B\in M_2(k)$ and $f(\overline{T})\in k[T]/(T^{p^n})$ such that
$A(\overline{T})=\Id+ f(\overline{T}) B$. Now it is easy to verify that $f(\overline{T})$ is additive. 
Then the matrix $A(\overline{T})$ belongs to $H_{s_1,s_2,n}(k[T]/(T^{p^n}))$, where $(s_1,s_2)\in \ker B\setminus \{(0,0)\}$. 
So any infinitesimal unipotent subgroup scheme of $\GL_{2,k}$ is contained in some $H_{s_1,s_2,n}$ for some $[s_1:s_2]\in \PP^1(k)$.
On the other hand, for any  $[s_1:s_2]\in \PP^1(k)$ the matrix
$$
A(\overline{T})=\Id+\overline{T}\left( \begin{smallmatrix} s_1s_2& -s_1^2\\  s_2^2& -s_1s_2  \end{smallmatrix}\right )
$$
gives an isomorphism between $\alpha_{p^n,k}$ and $H_{s_1,s_2,n}$.

\end{proof}

\section{Some noncommutative unipotent infinitesimal group schemes.}  \label{sec: noncommutative group schemes}
In this section we explicitly describe the group schemes which appear in the Theorem.
\begin{defi}
Let $k$ be a field of characteristic $p$ and consider the action of $\alpha_{p,k}$, as an automorphism of groups, on $\GG_{a,k}$ given by $a\cdot b= b+ab^p$.
We define the associated semi-direct product $\cE=\GG_{a,k}\rtimes \alpha_{p,k}$.
\begin{enumeratei}
\item For any $n\ge 0$ we define the subgroup scheme $\cD_n$ of $\cE$ induced by the  closed immersion $\alpha_{p^n,k}\to \GG_{a,k}$. It is isomorphic to the induced semi-direct product $\alpha_{p^n}\rtimes \alpha_{p,k}$.
\item For any $a\in k$ and $ n\ge 1$, we define   $\mathcal{H}_{a,n}$ as the kernel of the morphism
$$
(-a\fr^{n-1}, i):\cD_{n}\to \GG_{a,k}
$$
where $i$ is the inclusion $i:\alpha_{p,k}\to \GG_{a,k}$.

\end{enumeratei}
\end{defi}

Explicitly we have that $\cD_n$ is isomorphic to $\spec(k[S,T]/(S^p,T^{p^n})$
where  the comultiplication is given by 
 $$
\overline{S}\mapsto \overline{S}\otimes 1+1\otimes \overline{S}
$$ 
and
$$
\overline{T}\mapsto  \overline{T}\otimes 1+1\otimes \overline{T} +\overline{S} \otimes \overline{T}^p.
$$
We observe that, for $p=2$, this is the group that appears in the statement of the Theorem. Indeed the previous group scheme is equally isomorphic to 
$\spec k[S,T]/(S^{p},T^{p^{n}})$, where the comultiplication is given by
$$
\overline{S}\mapsto \overline{S}\otimes 1+1\otimes \overline{S}
$$ 
and
$$
\overline{T}\mapsto  \overline{T}\otimes 1+1\otimes \overline{T} +\overline{T}^p \otimes \overline{S}.
$$
 The two group schemes are isomorphic via $
\overline{S}\mapsto \overline{S}, \overline{T}\mapsto  \overline{T}+\overline{S}\overline{T}^p. $ We will need this second presentation in the proof of the Theorem.

Moreover $\cH_{a,n}$ is isomorphic to $\spec k[T]/(T^{p^{n}})$ where the comultiplication is given by
$$
\overline{T}\mapsto  \overline{T}\otimes 1+1\otimes \overline{T} +a\overline{T}^{p^{n-1}} \otimes \overline{T}^{p}.
$$
We collect some easy results that will be freely used in the rest of the paper.
\begin{lemm} \label{lem:commutative subgroups} Let $k$ be a field of characteristic $p$.
    \begin{enumerate1}
\item        
 $\cD_0$ is isomorphic to $\alpha_{p,k}$ and $\cD_1$ is isomorphic to $\alpha_{p,k}\times\alpha_{p,k}$. 

\item For any $n\ge 2$, $\cH_{0,1}$ is the center of $\cD_n$ and $\cD_{n}/\cH_{0,1}$ is isomorphic to $\alpha_{p^{n-1},k}\times \alpha_{p,k}$.

\item $\cH_{a,n}$ is commutative if and only if $n\le 2$ or $a=0$.
\item $\cH_{0,n}$ is isomorphic to $\alpha_{p^n,k}$ and $\cH_{a,1}$ is isomorphic to $\alpha_{p,k}$.

\item If $a\neq 0$, $\cH_{a,2}$ is isomorphic to $\alpha_{p^2,k}$ if $p>2$, and to a twisted form of the $2$-torsion of a supersingular elliptic curve if $p=2$.

 \item
If $n\ge l\ge 0$, $\cD_l$  is a closed subgroup scheme of $\cD_n$, and if $l\ge 1$,  $\cD_{l}$ corresponds to  $\ker \fr^l_{\cD_n}$. 
    \end{enumerate1}

\end{lemm}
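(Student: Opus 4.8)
The plan is to read off every assertion directly from the explicit Hopf-algebra presentations of $\cD_n$ and $\cH_{a,n}$ recorded above, treating the six items as (mostly) degenerate substitutions. Statements (1) and (4) are the boundary cases. For (1), setting $n=0$ collapses $\cD_0$ to $\alpha_{p,k}$ since $\alpha_{p^0,k}$ is trivial, while for $n=1$ the comultiplication term $\overline{S}\otimes\overline{T}^p$ of $\cD_1$ vanishes because $\overline{T}^p=0$ in $k[S,T]/(S^p,T^p)$, leaving the additive comultiplication of $\alpha_{p,k}\times\alpha_{p,k}$. For (4), putting $a=0$ in the comultiplication of $\cH_{a,n}$ leaves $\overline{T}\mapsto\overline{T}\otimes 1+1\otimes\overline{T}$, i.e. $\alpha_{p^n,k}$, and for $n=1$ the term $a\overline{T}^{p^{n-1}}\otimes\overline{T}^p$ again vanishes since $\overline{T}^p=0$, giving $\cH_{a,1}\cong\alpha_{p,k}$.

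For (3) I would use that a finite group scheme is commutative iff its Hopf algebra is cocommutative, and test this on the generator $\overline{T}$ of $\cH_{a,n}$: the deviation from cocommutativity is $a\overline{T}^{p^{n-1}}\otimes\overline{T}^p-a\overline{T}^p\otimes\overline{T}^{p^{n-1}}$, which is zero precisely when $a=0$, when $p^{n-1}=p$ (that is $n=2$), or when the term already vanishes ($n=1$); for $n\ge 3$ and $a\ne 0$ these are two distinct nonzero basis vectors of the tensor square, so $\cH_{a,n}$ is noncommutative. For (6), the closed immersion $\cD_l\into\cD_n$ is the Hopf-algebra quotient $k[S,T]/(S^p,T^{p^n})\to k[S,T]/(S^p,T^{p^l})$; since $\overline{S}^p=0$ already, the ideal generated by $p^l$-th powers of the augmentation ideal is just $(\overline{T}^{p^l})$, so $\ker\fr^l_{\cD_n}$ has exactly this coordinate ring for $l\ge 1$, identifying it with $\cD_l$. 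The restriction $l\ge 1$ is genuine, since $\cD_0\cong\alpha_{p,k}$ is a subgroup scheme but not a Frobenius kernel.

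For (2) I would argue in the semidirect product: two points $(b_1,a_1)$ and $(b_2,a_2)$ commute iff $a_1 b_2^{\,p}=a_2 b_1^{\,p}$, and imposing this identity functorially against the universal point of $\cD_n$ forces $a_1=0$ and $b_1^{\,p}=0$ whenever $n\ge 2$ (so that $\overline{T}^p\ne 0$). This cuts out exactly the copy of $\alpha_{p,k}=\{b:b^p=0\}$ sitting in the first factor with trivial second coordinate, which is $\cH_{0,1}$. The quotient is then computed by noting that $(b+ab^p)^p=b^p$ because $a^p=0$, so the action of $\alpha_{p,k}$ descends trivially to $\alpha_{p^n,k}/\alpha_{p,k}\cong\alpha_{p^{n-1},k}$, yielding $\cD_n/\cH_{0,1}\cong\alpha_{p^{n-1},k}\times\alpha_{p,k}$.

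Statement (5) is the crux. For $p>2$ the nonadditive term of $\cH_{a,2}$ is the symmetric $2$-cocycle $a\,\overline{T}^p\otimes\overline{T}^p$, and since $(X+Y)^{2p}-X^{2p}-Y^{2p}=2\,X^pY^p$ in characteristic $p$, the change of coordinate $\overline{T}\mapsto\overline{T}-\tfrac{a}{2}\overline{T}^{2p}$ (legitimate because $2$ is invertible) trivializes the comultiplication and exhibits an isomorphism $\cH_{a,2}\cong\alpha_{p^2,k}$. When $p=2$ this coboundary is unavailable, and I would instead identify $\cH_{a,2}$ through its Frobenius and Verschiebung: one checks $\fr\ne 0$ with $\fr^2=0$, and that the Verschiebung $V$ is nonzero precisely because $a\ne 0$, so $\cH_{a,2}$ is a self-dual local-local group scheme of order $4$ with $\fr^2=V^2=0$ --- the structure characterizing $E[2]$ for a supersingular elliptic curve $E$ in characteristic $2$ --- while the parameter $a$ records the twist. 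The main obstacle is exactly this last identification: pinning down the isomorphism type in characteristic $2$ requires either a Dieudonn\'e-module computation or an explicit comparison with $E[2]$, together with control of how $a$ produces a possibly nontrivial twisted form rather than the split model.
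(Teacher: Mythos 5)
Your treatments of items (1)--(4) and (6) are correct and complete; these are precisely the parts the paper dismisses as ``straightforward,'' and your verifications (cocommutativity tested on the generator for (3), the commutator identity $a_1b_2^{\,p}=a_2b_1^{\,p}$ evaluated against the universal point for (2), the Frobenius-kernel ideal computation for (6)) all hold up. For item (5) in characteristic $p>2$, your change of variable $\overline{T}\mapsto\overline{T}-\tfrac{a}{2}\overline{T}^{2p}$ is literally the isomorphism the paper writes down.

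The genuine gap is item (5) for $p=2$, the only non-straightforward assertion in the lemma, and you flag it yourself. Two things are asserted but never established: (i) that the Verschiebung of $\cH_{a,2}$ is nonzero when $a\neq 0$ --- this is the crux, since $\fr\neq 0$ with $\fr^2=0$ is equally true of $\alpha_{4,k}$, so without (i) you cannot distinguish $\cH_{a,2}$ from $\alpha_{4,k}$ at all; and (ii) that ``local-local of order $4$ with $\fr\neq 0\neq V$'' characterizes, over $\bar k$, the $2$-torsion of a supersingular elliptic curve (the self-duality you invoke is neither verified nor needed). Claim (i) requires an actual computation; one concrete route: the space of primitive elements of $k[T]/(T^4)$ with comultiplication $\overline{T}\mapsto\overline{T}\otimes 1+1\otimes\overline{T}+a\overline{T}^2\otimes\overline{T}^2$ is one-dimensional, spanned by $\overline{T}^2$, when $a\neq0$, whereas for $\alpha_{4,k}$ it is two-dimensional, so $\cH_{a,2}\not\cong\alpha_{4,k}$; since an infinitesimal commutative group scheme over $\bar k$ has $V=0$ exactly when it is a product of groups $\alpha_{2^i}$, and $\fr\neq0$ rules out $\alpha_{2,k}\times\alpha_{2,k}$, this yields $V\neq 0$. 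Claim (ii) then follows from the uniqueness, over $\bar k$, of the two-dimensional Dieudonn\'e module with $\fr,V$ nonzero and $\fr^2=V^2=\fr V=V\fr=0$. The paper avoids this machinery by an explicit chain: over $\bar k$ rescale $\overline{T}\mapsto c\overline{T}$ with $c^3=a$ to identify $\cH_{a,2}$ with $\cH_{1,2}$, then identify $\cH_{1,2}$ with $\ker(\fr+V:W_{2,k}\to W_{2,k})$, which is classically the $2$-torsion of a supersingular elliptic curve; the ``twisted form'' phrasing is exactly because the cube root $c$ need not lie in $k$. Either route can be made to work, but as written your proposal stops short of proving the one statement in the lemma that needs a proof.
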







\begin{proof}
All the proofs, except $(5)$, are straightforward. 
If $char(k)>2$, $\cH_{a,2}$ is isomorphic to $\alpha_{p^2,k}$ via the isomorphism $\overline{T}\mapsto \overline{T}-a\frac{\overline{T}^{2p}}{2}$.
If $char(k)=2$ and $k$ algebraically closed, $\cH_{a,2}$ is isomorphic to $\cH_{1,2}$, via $\overline{T}\mapsto c \overline{T}$ where $c$ is a cubic root of $a$ (see also Lemma \ref{lem:isomorphisms H_al} for more details). And $\cH_{1,2}$ is  isomorphic to $\ker(\fr+V: W_{2,k}\to W_{2,k})$, where $W_{2,k}$ is the group scheme of Witt vectors of length $2$ and $V$ is the Verschiebung. This group scheme is known to be isomorphic to the $2$-torsion of a supersingular elliptic curve. 
\end{proof}

\begin{rema}
    If $n\ge 3$ and $a\neq 0$, $\cH_{a,n}$ is a subgroup scheme of the nonabelian extension of $\GG_a$ by $\GG_a$ given by the cocycle $a T^{p} T'^{p^{n-1}}$ (see \cite[II, \S 3, 4.6]{DG}).  
\end{rema}




\begin{lemm}\label{lem:subgroups Gn}
Let $n\ge 0$.  Any closed subgroup scheme of $\cD_n$ is equal to $\cD_l$, with $0\le l\le n$, 
or to $\cH_{a,m}$, for some $a\in k$ and $1\le m\le n$.  
\end{lemm}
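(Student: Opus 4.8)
The plan is to analyze a closed subgroup scheme $G\subseteq\cD_n$ through the defining exact sequence
$$1\to\alpha_{p^n}\to\cD_n\xrightarrow{\;\pi\;}\alpha_{p,k}\to1,$$
in which $\alpha_{p^n}=\cH_{0,n}$ is the normal ``$T$-part'' $\{S=0\}$ and $\pi$ is the projection to the ``$S$-part''. The two invariants of $G$ I would extract are its image $\pi(G)\subseteq\alpha_p$ and its intersection $N:=G\cap\alpha_{p^n}$ with the kernel. Since $\alpha_p$ is simple, $\pi(G)$ is either $0$ or all of $\alpha_p$; and $N$ is a closed subgroup scheme of $\alpha_{p^n}$. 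The first input I would establish, by induction on $n$ (pushing forward and pulling back along the Frobenius isogeny $\fr\colon\alpha_{p^n}\to\alpha_{p^{n-1}}$ and using that $\alpha_p$ has no subgroups besides $0$ and itself), is that every closed subgroup scheme of $\alpha_{p^n}$ equals $\alpha_{p^r}=\ker\fr^r$ for a unique $0\le r\le n$. Thus $N=\alpha_{p^r}=\cH_{0,r}$, and if $\pi(G)=0$ then $G=N=\cH_{0,r}$ and we are done.

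So assume $\pi(G)=\alpha_p$; then $G$ is an extension $1\to\alpha_{p^r}\to G\to\alpha_p\to1$ of order $p^{r+1}$, and I must pin down its embedding. For $r\ge1$ I would use that $\alpha_{p^r}$ is normal in $\cD_n$ (a one-line check that the conjugation action $s\cdot t=t+st^p$ preserves $\{t^{p^r}=0\}$, since $(t+st^p)^{p^r}=t^{p^r}+s^{p^r}t^{p^{r+1}}$ vanishes once $t^{p^r}=0$) and compute the quotient: in the coordinate $u:=t^{p^r}$ the cross term of the group law becomes $s_1^{p^r}u_2^p$, which vanishes because $s^{p^r}=0$ for $r\ge1$, so $\cD_n/\alpha_{p^r}\cong\alpha_p\times\alpha_{p^{n-r}}$ is commutative and split. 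Then $G/\alpha_{p^r}$ is a copy of $\alpha_p$ projecting isomorphically onto the first factor, hence the graph of a homomorphism $\alpha_p\to\alpha_{p^{n-r}}$; since every such map has the form $u\mapsto c_0s$, this graph is $\{u=c_0s\}$ for a unique $c_0\in k$. Pulling back along $(s,t)\mapsto(s,t^{p^r})$ gives $G=\{t^{p^r}=c_0s\}\subseteq\cD_n$, which I would identify with $\cD_r$ when $c_0=0$ and with $\cH_{c_0^{-1},\,r+1}$ when $c_0\ne0$: the relation $s=c_0^{-1}t^{p^r}$ is exactly the defining equation $s=at^{p^{m-1}}$ of $\cH_{a,m}$ with $m=r+1$ and $a=c_0^{-1}$, and $s^p=0$ forces $t^{p^{r+1}}=0$.

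The remaining case $r=0$ must be handled by hand, since there $\cD_n/N=\cD_n$ is itself noncommutative and the graph argument breaks down. Here $G\cong\alpha_p$ is a section of $\pi$, so $G=\{(s,\psi(s))\}$ for a morphism $\psi\colon\alpha_p\to\alpha_{p^n}$ satisfying $\psi(s_1+s_2)=\psi(s_1)+\psi(s_2)+s_1\psi(s_2)^p$; because $\psi(s)^p=0$ (as $s^p=0$), this collapses to additivity, forcing $\psi(s)=b_1s$, whence $G=\{t=b_1s\}$ is $\cD_0$ if $b_1=0$ and $\cH_{b_1^{-1},1}$ if $b_1\ne0$. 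I expect the main obstacle to be the quotient computation for $r\ge1$: correctly identifying $\cD_n/\alpha_{p^r}$ and matching the pullback of each graph with the explicit presentations of $\cD_r$ and $\cH_{a,r+1}$ recorded after the definition, while keeping the degenerate cases consistent with the stated parameter ranges (the case $r=0$, and the trivial subgroup, which arises as $N=\alpha_{p^0}$).
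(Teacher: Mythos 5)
Your proof is correct, but it follows a genuinely different route from the paper's. The paper first reduces to the case $\fr^{n-1}_H\neq 0$, shows that a proper subgroup $H$ of maximal height must contain the center $\cH_{0,1}$ (otherwise $\cD_n\simeq H\times\cH_{0,1}$ would be commutative), deduces from this that $H$ is normal in $\cD_n$, and then identifies $H$ as a kernel by explicitly computing \emph{all} group-scheme homomorphisms $\cD_n\to\alpha_p$ via a Hopf-algebra calculation with the comultiplication --- that calculation is the bulk of the paper's argument. You instead exploit the semidirect-product exact sequence $1\to\alpha_{p^n}\to\cD_n\to\alpha_p\to 1$ directly: the invariants $N=G\cap\alpha_{p^n}=\alpha_{p^r}$ and $\pi(G)$, the observation that $\alpha_{p^r}$ is normal in all of $\cD_n$ with \emph{split commutative} quotient $\alpha_{p^{n-r}}\times\alpha_p$ when $r\ge 1$, and the graph trick, which reduces everything to the trivial classification $\Hom(\alpha_p,\alpha_{p^{n-r}})=\{s\mapsto c_0 s\}$, plus the separate cocycle computation for sections when $r=0$. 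Your two standing inputs (subgroup schemes of $\alpha_{p^n}$ are exactly the Frobenius kernels $\alpha_{p^r}$; homomorphisms out of $\alpha_p$ into a vector group are linear) are standard and your induction sketch for the first is sound, so there is no gap. What your approach buys is economy: it avoids computing $\Hom(\cD_n,\alpha_p)$ out of a noncommutative group, treats all subgroups uniformly without the height-minimality reduction, and produces the defining equations $\{t^{p^r}=c_0 s\}$ explicitly; what the paper's approach buys is the structural byproduct that every proper subgroup of maximal height contains the center and is normal, and a homomorphism computation of the same flavor reused in Lemma \ref{lem:isomorphisms H_al}. One caveat you correctly flag is shared by both arguments: the trivial subgroup (your case $N=\alpha_{p^0}$, $\pi(G)=0$) is not literally on the stated list, since $\cD_l$ and $\cH_{a,m}$ all have order at least $p$; this is a defect of the statement's conventions, not of your proof.
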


\begin{proof}
The result is clear for $n \le 1$. So we suppose $n\ge 2$. In particular, $\cD_{n}$ is not commutative.
Let $H$ be a closed subgroup scheme of $\cD_n$. If $H\In \ker \fr^l_{\cD_n}$, for some $0\le l< n$, then $H$ is a closed subgroup scheme of $\cD_{l}$. Up to take minimal $0\le l \le n$ such that $\fr^{l}_H=0$, we can suppose that $\fr^{n-1}_H \neq 0$. In particular, if $H$ is a proper subgroup of $\cD_n$, $H$ has order $p^{n}$, otherwise iterating $n-1$ times the Frobenius, which has a kernel of order at least $p$, we will get the trivial morphism. 
Suppose that $H$ does not contain the center of $\cD_{n}$. Then the natural map $H\times \cH_{0,1}\to \cD_n$ is an isomorphism. Henceforth $H\simeq \cD_{n}/\cH_{0,1}\simeq \alpha_{p^{n-1},k}\times \alpha_{p,k}$, which would imply that $\cD_n$ is commutative. Therefore $H$ contains the center of $\cD_n$. As a consequence, $H/\cH_{0,1}$ is a closed subgroup scheme of $\cD_n/\cH_{0,1}\simeq \alpha_{p^{n-1},k}\times \alpha_{p,k}$. In particular, $H/\cH_{0,1}$ is a normal subgroup scheme of $\cD_n/\cH_{0,1}$, which implies that $H$ is a normal subgroup scheme of $\cD_n$.
So $H$ is obtained as the kernel of a morphism from $\cD_n$ to $\alpha_p$. Any such a morphism is given by an element
$$
P(\overline{S},\overline{T})=
\sum_{0\le i<p, 0\le j<p^{n}}a_{ij}\overline{S}^i\overline{T}^j\in k[S,T]/(S^{p},T^{p^{n}})$$
such that $P(\overline{S},\overline{T})^p=0$ and
\begin{equation}\label{eq: morphism to alpha_p}
\sum_{\substack{0\le i<p\\ 0\le j<p^{n}}}a_{ij}(\overline{S}\otimes 1+1\otimes\overline{S})^i(\overline{T}\otimes 1+ 1\otimes \overline{T}+\overline{T}^p\otimes \overline{S})^j=\sum_{\substack{0\le i<p\\ 0\le j<p^{n}}}a_{ij}(\overline{S}^i\overline{T}^j\otimes 1+1\otimes \overline{S}^i\overline{T}^j).
\end{equation}
in $
k[S,T]/(S^{p},T^{p^{n}})\otimes k[S,T]/(S^{p},T^{p^{n}}).
$ We can suppose $a_{00}=0$.
If we reduce modulo $(\overline{T})\otimes (1)$ we get
$$
\sum_{\substack{0\le i<p\\ 0\le j<p^{n}}}a_{ij}(\overline{S}\otimes 1+1\otimes\overline{S})^i( 1\otimes \overline{T}^j)=\sum_{\substack{0\le i<p\\ 0< j<p^{n}}}a_{ij}(1\otimes \overline{S}^i\overline{T}^j)+\sum_{0<i<p}a_{i0}(1\otimes \overline{S}^i+\overline{S}^i\otimes 1).
$$
in $k[S,T]/(S^{p},T)\otimes k[S,T]/(S^{p},T^{p^{n}})$. Therefore $a_{ij}=0$ if $1<i<p$ or $i=1$ and $j> 0$. 
So
$$
P(\overline{S},\overline{T})=a_{1,0}\overline{S}+Q(\overline{T}).
$$
If we reduce \eqref{eq: morphism to alpha_p} modulo $(1)\otimes (\overline{S})$ we find that $Q(\overline{T})$ should be additive. Since $P(\overline{S},\overline{T})^p=0$ we get that $P(\overline{S},\overline{T})=a_{10}\overline{S}+a_{0p^{n-1}}\overline{T}^{p^{n-1}}$. Moreover,  $a_{10}\neq 0$ since we supposed that $\fr^{n-1}_H\neq 0$.
So we have that $H$ is isomorphic to $\cH_{a,n}$, with $a=\frac{a_{0p^{n-1}}}{a_{10}}$.
\end{proof}


\begin{lemm}
Let $G$ be an infinitesimal unipotent group scheme with unidimensional Lie algebra over a field of characteristic $p$. An action, as an automorphism of groups, of an infinitesimal group scheme $H$ of multiplicative type over $G$ is faithful if and only if the induced action on $\ker \fr_G$ is faithful. And, if it happens, $H$ has unidimensional Lie algebra.
\end{lemm}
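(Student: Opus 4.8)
The plan is to reduce both faithfulness statements to a single character of $H$ and then to invoke linear reductivity of multiplicative type group schemes. First I would record the structure of $G$: since it is infinitesimal with one-dimensional Lie algebra, its cotangent space is one-dimensional, so its coordinate ring is $k[T]/(T^{m})$ with $m=p^{n}$ a power of $p$ (the order of a connected finite group scheme). Hence $\ker\fr_G=\spec k[T]/(T^{p})$ is a unipotent group scheme of order $p$, so $\ker\fr_G\cong\alpha_{p,k}$, whose automorphism group scheme is $\gm$. The restriction of any automorphism of $G$ to the characteristic subgroup $\ker\fr_G$ is the scaling by the scalar through which it acts on the line $\Lie(G)$, so the restriction homomorphism $\aut(G)\to\aut(\ker\fr_G)\cong\gm$ is precisely the character recording the action on $\Lie(G)$. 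Composing with $H\to\aut(G)$, the induced action on $\ker\fr_G$ becomes a character $\chi\colon H\to\gm$, and ``faithful on $\ker\fr_G$'' means exactly that $\chi$ is a closed immersion. One implication is then immediate: as $\ker\fr_G$ is characteristic in $G$, the kernel of $H\to\aut(G)$ is contained in $\ker\chi$, so faithfulness on $\ker\fr_G$ forces faithfulness on $G$.

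For the converse I would argue by contraposition, and this is the heart of the matter. Suppose $\chi$ is not faithful and put $N=\ker\chi\neq 1$; as a closed subgroup scheme of the multiplicative type group $H$ it is again of multiplicative type, hence linearly reductive. I claim $N$ acts trivially on $G$. The action turns $A=k[T]/(T^{p^{n}})$ into an $N$-representation, and since $N$ acts by group automorphisms fixing the identity section and by algebra maps, both $\frm=(T)$ and $\frm^{2}$ are $N$-stable; on the line $\frm/\frm^{2}$ the action is trivial because $N\In\ker\chi$ acts trivially on $\Lie(G)=(\frm/\frm^{2})^{\vee}$. By linear reductivity the surjection $\frm\to\frm/\frm^{2}$ of $N$-representations splits $N$-equivariantly, producing an $N$-invariant element $T'\in\frm$ reducing to a generator of $\frm/\frm^{2}$; by Nakayama $T'$ generates $A$ as a $k$-algebra. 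Since the coaction is an algebra homomorphism fixing $T'$, it fixes all of $A$, so $N$ acts trivially on $G$. Thus $N$ lies in the kernel of $H\to\aut(G)$, which is therefore not faithful, completing the equivalence.

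Finally, if the action on $G$ is faithful then by the two implications just shown $\chi$ is a closed immersion, so $H$ is a nontrivial infinitesimal subgroup scheme of $\gm$, that is $H\cong\mu_{p^{l},k}$ for some $l\ge 1$, and in particular $\Lie(H)$ is one-dimensional. The main obstacle is the contrapositive step, namely upgrading ``$N$ acts trivially on $\Lie(G)$'' to ``$N$ acts trivially on $G$''. This is exactly where linear reductivity is indispensable: it is what lets one lift the invariant cotangent generator to an invariant uniformizer, and for a non-reductive $N$ (for instance a unipotent one) the conclusion fails, which is why the multiplicative type hypothesis on $H$ is essential.
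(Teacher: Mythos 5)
Your proof is correct, and it takes a genuinely different route from the paper's. The paper first reduces to the case where $H$ has height one (using that the kernel of an action of an infinitesimal group scheme meets $\ker \fr_H$ nontrivially) and then invokes the Demazure--Gabriel automorphism-functor machinery: a height-one $H$ necessarily lands in $\cA ut_1(G)=\ker (\cA ut(G)\to \cA ut(G/\ker \fr_G))$, centrality of $\ker \fr_G\simeq \alpha_{p,k}$ yields an exact sequence $0\to \cH om_{gr}(G/\ker \fr_G,\alpha_{p,k})\to \cA ut_1(G)\to \cA ut_{gr}(\ker \fr_G)\simeq \GG_{m,k}$, the left-hand term is shown to be unipotent by d\'evissage, and the conclusion follows because a multiplicative type group admits no nontrivial homomorphism to a unipotent group scheme. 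You bypass all of this: working directly with $k[G]\simeq k[T]/(T^{p^n})$, you note that $N=\ker \chi$ acts trivially on the cotangent line and use linear reductivity of multiplicative type groups to lift a generator of $\frm/\frm^2$ to an $N$-invariant uniformizer $T'$, which forces $N$ to act trivially on all of $G$. Your route needs no height-one reduction, no centrality of $\ker\fr_G$, and no structure theory for $\cA ut_1(G)$, and it treats all of $H$ at once, giving $H\simeq \mu_{p^l,k}$ directly (the paper literally produces $\ker\fr_H\simeq \mu_{p,k}$ and deduces the Lie algebra statement from $\Lie(H)=\Lie(\ker \fr_H)$). What the paper's approach buys in exchange is structural information about $\cA ut_1(G)$ itself, an extension of a subgroup of $\GG_{m,k}$ by a unipotent Hom-group scheme, which is in the spirit of the rest of the paper; your approach isolates the cleaner general principle that a linearly reductive group scheme acting on a finite local algebra, trivially on its cotangent space, acts trivially, and this is exactly where the multiplicative type hypothesis enters in both arguments. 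One shared caveat, not a defect specific to your write-up: the closing assertion that $\Lie(H)$ is one-dimensional implicitly assumes $H\neq 1$ (the trivial group acts faithfully on everything); the paper's proof makes the same implicit assumption.
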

\begin{proof}
The 'if' part is obvious. We prove the 'only if' part.
We can suppose that $H$ is of height $1$ since the kernel of any action of an infinitesimal group scheme has a nontrivial intersection with the kernel of the Frobenius (see also \cite[Proposition 6.1]{gouthier2023infinitesimal}). We also remark that $\dim_k \Lie (H)=\dim_k \Lie (\ker{\fr_H})$.
Therefore, by \cite[III,\S 6, Propoition 7.1]{DG}, $H$ is a subgroup scheme of $\cA ut_1(G)=\ker (\cA ut(G)\to \cA ut(G/\ker \fr_G))$.
Moreover, $\ker \fr_G\simeq \alpha_p$ is contained in the center since a unipotent group scheme has a nontrivial center, and, since $G$ is infinitesimal unipotent, the intersection with the kernel of the Frobenius is nonempty.  
Therefore the induced action of $G/\ker F_G$ over $\ker F_G$ is trivial, then, by \cite[III,\S 6, Proposition 7.4]{DG}, we have an exact sequence
$$
0\to \cH om_{gr}(G/\ker \fr_G,\alpha_p)\too \cA ut_1(G)\too \cA ut_{gr}(\ker \fr_G)\simeq \GG_{m,k}.
$$
But we remark that $\cH om_{gr}(\alpha_p,\alpha_p)\simeq \GG_{a,k}$, so, by dévissage, we get that $\cH om_{gr}(G/\ker \fr_G,\alpha_p)$ is an unipotent group scheme.
Since $H$ is of multiplicative type, if $H$ acts faithfully on $G$ then it acts faithfully on $\ker \fr_G$. Therefore $H$ is isomorphic to $\mu_{p,k}$ and so its Lie algebra is unidimensional.


\end{proof}
 \begin{lemm} \label{lem:isomorphisms H_al}
 Let $k$ be a field of characteristic $p$ and 
      let $n> 2$ or $n=2$ and $p=2$.  
     \begin{enumerate1}

\item Any action of $\mu_{p,k}$ over $\cH_{0,n}\simeq \alpha_{p^n,k}$, as automorphism of groups, is conjugate to $v \cdot t= v^{i} t$, for some $0\le i\le p-1$. 
Therefore, for any nontrivial action of $\mu_{p,k}$ over $\alpha_{p^n,k}$, all semi-direct products  $\alpha_{p^n,k}\rtimes \mu_p$ are isomorphic.

    \item    If $a,b \in k\setminus \{0\}$, then $\cH_{a,n}$ is isomorphic to $\cH_{b,n}$ if and only if $b/a$ is a $(p^{n}+p-1)$-th power. 
\item There are no nontrivial actions, as automorphism of groups, of infinitesimal group schemes of multiplicative type over $\cH_{a,n}$, for any $a\in k\setminus \{0\}$. 

\end{enumerate1}
 \end{lemm}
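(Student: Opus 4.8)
My plan is to run all three parts on one engine. Every infinitesimal group scheme of multiplicative type is linearly reductive, so any action on $\cH_{a,n}$ diagonalizes the coordinate ring $A=k[T]/(T^{p^n})$ into weight spaces; since $\frm/\frm^2=\langle\bar T\rangle$ is one–dimensional, I may always replace $T$ by a \emph{homogeneous} generator, of some weight $\chi_0$ in the character group $X(M)$. The reduced coproduct $\delta(x):=\Delta(x)-x\otimes1-1\otimes x$ is then an equivariant map, and comparing weights on $\delta(T)$ drives the argument.

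For (1) I take $a=0$, so $\cH_{0,n}\cong\alpha_{p^n,k}$ and $T$ is primitive. The space of primitives, spanned by $T,T^p,\dots,T^{p^{n-1}}$, is a $\mu_{p,k}$-submodule surjecting equivariantly onto the weight line $\frm/\frm^2$; linear reductivity lets me lift a generator of that line to a homogeneous \emph{primitive} $T'$ of weight $i\in\ZZ/p\ZZ=\{0,\dots,p-1\}$. As $T'$ is again an additive coordinate, in it the action is exactly $v\cdot t=v^i t$, the conjugating automorphism being $T\mapsto T'$. For the ``therefore'', precomposing such an action with the automorphism $v\mapsto v^c$ of $\mu_{p,k}$ ($c\in\FF_p^\times$) turns weight $i$ into weight $ci$, so any two nontrivial semi-direct products $\alpha_{p^n,k}\rtimes\mu_{p,k}$ are identified.

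For (2) the direction $(\Leftarrow)$ is explicit: the scaling $T\mapsto\lambda T$ is an isomorphism $\cH_{a,n}\to\cH_{b,n}$ precisely when the cocycle coefficients match, i.e. $a\lambda=b\lambda^{p^{n-1}+p}$, which reads $b/a=\lambda^{-(p^{n-1}+p-1)}$; thus $b/a$ being a $(p^{n-1}+p-1)$-st power suffices (for $n=2$, $p=2$ this recovers the cube-root isomorphism of Lemma~\ref{lem:commutative subgroups}(5)). For $(\Rightarrow)$ I would classify all homomorphisms $f$ by writing $f^*(T)=\sum_i c_iT^i$ and solving the Hopf condition $\delta(f^*T)=b\,(f^*T)^{p^{n-1}}\otimes(f^*T)^p$ bidegree by bidegree; matching bidegree $(p^{n-1},p)$ gives the leading relation $a c_1=b\,c_1^{p^{n-1}+p}$ up to a contribution of $c_{p^{n-1}+p}$ that must be absorbed. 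The cleanest route is to use an intrinsic invariant: for $n\ge 3$ the commutator pairing $\cH_{a,n}\times\cH_{a,n}\to\ker\fr_{\cH_{a,n}}\cong\alpha_{p,k}$, namely the antisymmetric part $a\bigl(T^{p^{n-1}}\otimes T^p-T^p\otimes T^{p^{n-1}}\bigr)$, is intertwined by any isomorphism and multiplied by a determined power of the leading coefficient, yielding $b/a\in(k^\times)^{p^{n-1}+p-1}$; the case $n=2$, $p=2$ is read off the explicit supersingular/Witt-vector model of Lemma~\ref{lem:commutative subgroups}(5). (One may also realize $\cH_{a,n}$ inside the extension with cocycle $a\,T^pT'^{p^{n-1}}$ of the Remark after Lemma~\ref{lem:subgroups Gn} and scale each $\GG_{a,k}$.)

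For (3), let a multiplicative infinitesimal $M$ act on $\cH_{a,n}$ with $a\neq0$, and fix a homogeneous generator $T$ of weight $\chi_0$ in the finite abelian $p$-group $X(M)$. Equivariance forces $\delta(T)$ to be homogeneous of weight $\chi_0$. Under the standing hypothesis the cocycle is \emph{essential}: for $n\ge3$ its antisymmetric part (the commutator, hence coordinate-independent) is nonzero, and for $n=2$, $p=2$ the only candidate coboundary $\delta(T^{2p})$ vanishes since $T^{2p}=T^{p^2}=0$. Hence $\delta(T)$ has a nonzero component in total degree $p^{n-1}+p$, of weight $(p^{n-1}+p)\chi_0$, and homogeneity forces $(p^{n-1}+p-1)\chi_0=0$; as $p^{n-1}+p-1\equiv-1\pmod p$ is a unit on the $p$-group $X(M)$, we get $\chi_0=0$ and the action is trivial. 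I expect the \emph{main obstacle} to be the $(\Rightarrow)$ half of (2): rigorously showing that no isomorphism alters $a$ beyond a $(p^{n-1}+p-1)$-st power, i.e. that the higher coefficients $c_i$ only realize automorphisms of $\cH_{a,n}$; the subsidiary point, shared with (3), is that under the hypothesis the leading bidegree of $\delta(T)$ is a genuine coboundary-free invariant — which is exactly where the hypothesis enters, since for $n=2$, $p>2$ the cocycle is a coboundary and the statement fails.
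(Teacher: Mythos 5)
Your route is genuinely different from the paper's, and most of it is sound. For (1), the paper extends the $\mu_{p,k}$-action from $\alpha_{p^n,k}$ to $\GG_{a,k}$ (via a section of $\cA ut_{gr}(\GG_{a,k})\to\cA ut_{gr}(\alpha_{p^n,k})$) and then quotes Demazure--Gabriel's classification of $\mu_{p,k}$-actions on $\GG_{a,k}$; your argument (weight decomposition, lift of a generator of $\frm/\frm^2$ to a \emph{homogeneous primitive}) is intrinsic and equally correct. For (2)$(\Rightarrow)$ with $n\ge 3$ and for (3), your antisymmetrization device is in fact cleaner than the paper's computation: the paper takes an arbitrary element $P(\overline{T})=\sum a_i\overline{T}^i$ of $\cI som(\cH_{a,n},\cH_{b,n})(R)$, kills most coefficients by restricting to Frobenius kernels and differentiating, proves separately that $a_{p^{n-1}+p}=0$, and only then compares the coefficient of $\overline{T}^{p^{n-1}}\otimes \overline{T}^p$; in your approach every flip-symmetric term (in particular the troublesome $c_{p^{n-1}+p}$ contribution) cancels automatically, because all powers of $T\otimes 1+1\otimes T$ are symmetric, so the lowest-degree comparison $ac_1=bc_1^{p^{n-1}+p}$ drops out at once, and (3) then follows from the weight count on $\delta(T)$ without the paper's detour through the Isom functor over arbitrary $R$ and the unnamed lemma preceding Lemma~\ref{lem:isomorphisms H_al}. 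Two remarks: your exponent $p^{n-1}+p-1$ differs from the $p^{n}+p-1$ in the statement, but it agrees with what the paper's own computation yields (and with the cube root in Lemma~\ref{lem:commutative subgroups}(5)), so you have in effect corrected a typo rather than missed the target; and in (3) you should base-change to $\bar k$ before invoking weights, since an infinitesimal group scheme of multiplicative type need not be diagonalizable over $k$ itself -- triviality of the action descends, so this is harmless.

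The genuine gap is (2)$(\Rightarrow)$ in the case $n=2$, $p=2$. There $\cH_{a,2}$ is commutative (Lemma~\ref{lem:commutative subgroups}(3)), so the invariant you rely on vanishes identically: $T^{p^{n-1}}\otimes T^p-T^p\otimes T^{p^{n-1}}=0$ once $p^{n-1}=p$, and there is no leading antisymmetric coefficient to compare. Your fallback -- reading the answer off the model of Lemma~\ref{lem:commutative subgroups}(5) -- does not do the job: that lemma only identifies $\cH_{a,2}$ with $\ker(\fr+V:W_{2,k}\to W_{2,k})$ over an algebraically closed field, where the cube criterion is vacuous because every unit is a cube; over general $k$ one would need the automorphism group scheme of $\cH_{1,2}$ (precisely, that its image in $\cA ut(\ker \fr_{\cH_{1,2}})\simeq \GG_{m,k}$ is $\mu_{3,k}$) together with a descent argument, and computing that automorphism scheme is exactly the Hopf-algebra calculation you are trying to avoid. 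The paper handles this case directly: writing $P(\overline{T})=a_1\overline{T}+a_2\overline{T}^2+a_3\overline{T}^3$, the isomorphism condition forces $a_3=0$ and $aa_1^{4}=a_1b$, i.e.\ $b/a=a_1^{3}$. The same spot is also the soft point of your (3): for $n=2$, $p=2$ the one-line claim that ``the only candidate coboundary $\delta(T^{2p})$ vanishes'' is insufficient as stated, because after re-expanding in the homogeneous coordinate $T'$ the degree-$3$ coboundary $\delta(c_3T^3)$ does a priori contribute to the $T'^2\otimes T'^2$ coefficient; the contribution happens to cancel (it occurs in flip-symmetric pairs, hence is multiplied by $2=0$), so your conclusion stands, but this verification is needed -- a nonzero coefficient in total degree $3$ alone would only give $2\chi_0=0$, which on a $2$-group says nothing.
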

\begin{proof}
$(1)$. It is easy to see that the map $\cA ut_{gr}(\GG_a) \to \cA ut_{gr}(\alpha_{p^n})$ admits a section. Therefore any action of $\mu_{p,k}$ over  $\alpha_{p^n,k}$ extends to an action over $\GG_{a,k}$. Now, by \cite[III, \S6, Corollaire 7.9]{DG}, we have that any action of $\mu_{p,k}$ on $\GG_{a,k}$ is given by
 $$
v \cdot x= v^ix+(v^i-1)\sum_{l=1}^sa_l x^{p^l}
 $$
 for some $s\ge 1$, $0\le i \le p-1 $ and $a_i\in k$ for any $1\le l\le s$. Therefore any action of $\mu_{p,k}$ on $\alpha_{p^n,k}$ is given by
 $$
v \cdot x= v^ix+(v^i-1)\sum_{l=1}^{p^n-1}a_l x^{p^l}
  $$
  for some $a_i\in k$ for any $1\le l\le p^{n-1}-1$ and $0\le i \le p-1$. But this action is conjugated to the action $v\cdot x=v^i x$ via the automorphism $x\mapsto x+\sum_{l=1}^{p^n-1}a_l x^{p^l}$. If the action is nontrivial, then $i>0$. 
 Moreover, $v\mapsto v^i$ is an automorphism of $\mu_{p,k}$, therefore all the associated semi-direct products are isomorphic.
 $(2),(3)$.  We recall that the Hopf algebra of $\cH_{a,n}$ is isomorphic to $k[T]/(T^{p^{n}})$,  where the comultiplication is given by $\overline{T}\otimes 1 + 1\otimes \overline{T}+a\overline{T}^{p^{n-1}}\otimes \overline{T}^{p}.$ We now consider an element of $\cI som(\cH_{a,n},\cH_{b,n})(R)$, with $R$ a $k$-algebra.
An isomorphism from $\cH_{a,n,R}$ to $\cH_{b,n,R}$ is given by an element $P(\overline{T})=\sum_{i=1}^{p^{n}-1}a_i\overline{T}^i\in R[T]/(T^{p^{n}})$ such that $a_1\in R$ is invertible and
$$
\sum_{i=1}^{p^{n}-1}a_i(\overline{T}^i\otimes 1+1\otimes \overline{T}^i) +a(\sum_{i=1}^{p-1}a_{i}^{p^{n}}\overline{T}^{ip^{n-1}})\otimes (\sum_{i=1}^{p^{n-2}}a_i^p\overline{T}^{ip})=\sum_{i=1}^{p^{n}-1}a_i(\overline{T}\otimes 1+1\otimes \overline{T}+b\overline{T}^{p^{n-1}}\otimes \overline{T}^p)^i.
$$
Since it induces an isomorphism on the kernels of the $p^{n-1}$-th power of the Frobenius, which are isomorphic to $\alpha_{p^n,R}$, we get that $a_{i}=0$
 if $1<i<p^{n-1}$ and 
$i$ is not a power of $p$. 
Moreover, $a_i=0$ if $i>p^{n-1}$ and $i$ is not divisible by $p$ (you can see it deriving both sides).
So we get
$$\sum_{r= p^{n-2}+1}^{p^{n-1}-1}a_{pr}(\overline{T}^{pr}\otimes 1+1\otimes \overline{T}^{{pr}})+a
 a_{1}^{p^{n-1}} \overline{T}^{p^{n-1}} \otimes (\sum_{r=0}^{n-2}a_{p^r}^p\overline{T}^{p^{r+1}})=$$$$\sum_{r=p^{n-2}+1}^{p^{n-1}-1}a_{pr}{(\overline{T}^p\otimes 1+1\otimes \overline{T}^p)}^r+a_{1}b \overline{T}^{p^{n-1}}
\otimes\overline{T}^p$$
If $n>2$ and $a\neq 0$, comparing the coefficients of $\overline{T}^p\otimes \overline{T}^{p^{n-1}}$, we get $a_{p^{n-1}+p}=0$ and, comparing the coefficients of $\overline{T}^{p^{n-1}}\otimes \overline{T}^{p}$, 
we get $aa_1^{p^{n-1}+p}=a_1 b$, which means  that $a_1$ is a $(p^{n-1}+p-1)$-th power of $b/a$. 
If $n=2$ and $p=2$ the above equality reduces to
$$
a a_1^{2+2}\overline{T}^p\otimes \overline{T}^p=a_1 b \overline{T}^p\otimes \overline{T}^p,
$$
so $a_1$ is a  cubic root of $b/a$.  If $R=k$ this proves that $\cH_{a,n}$ is isomorphic to $\cH_{b,n}$ if and only if $b/a$ is a $(p^{n}+p-1)$-th power.
But this also proves that, if $a\neq 0$, the image of the map $\cA ut(\cH_{a,n})\to \cA ut(\ker \fr_{\cH_{a,n}})$ is contained in $\mu_{p^{n-1}+p-1,k}$.
This implies, by the previous Lemma, that any infinitesimal group scheme of multiplicative type acts trivially on $\cH_{a,n}$ if $a\neq 0$.
 On the other hand, if $a_1\in k$ is such that $a_1^{p^n+p-1}= b/a$ then the polynomial $P(\overline{T})=a_1 \overline{T}$ gives an isomorphism for any $n\ge 1$ and any prime $p$.

\end{proof}
\section{Proof of the Theorem}

We start with a Lemma.
\begin{lemm} \label{lem: necessary conditions}
Let $k$ be a field of characteristic $2$ and $G$ be a unipotent subgroup scheme of $\PGL_{2,k}$. Then 
\begin{enumerate1}
\item
$\ker \fr_G$ is isomorphic to $\alpha_{2,k}$ or to $\alpha_{2,k}\times \alpha_{2,k}$; 
\item
$\IM \fr_G$ is isomorphic to $\alpha_{2^n,k}$, for some $n$.


\end{enumerate1}
\end{lemm}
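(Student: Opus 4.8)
The plan is to treat the two parts separately, reducing (1) to a computation with restricted Lie algebras and then deducing (2) from (1) by a Frobenius-lifting argument. Throughout I assume $G$ nontrivial (otherwise $\ker\fr_G$ is trivial and the degenerate case must be excluded from the statement of (1)).

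For (1), I would use that $\ker\fr_G$ is a height-one group scheme, hence determined by its restricted Lie algebra $\Lie(G)$, which is a $p$-Lie subalgebra of $\Lie(\PGL_{2,k})$. The first task is to write $\Lie(\PGL_{2,k})$ as a restricted Lie algebra in characteristic $2$. Taking the images $x,y,h$ of $E_{12},E_{21},E_{11}$ in $\mathfrak{gl}_{2,k}/k\cdot\Id$, one gets a basis with $[h,x]=x$, $[h,y]=y$, $[x,y]=\overline{E_{11}-E_{22}}=\overline{\Id}=0$ and $x^{[2]}=y^{[2]}=0$, $h^{[2]}=h$; here the phenomenon that $\mathfrak{pgl}_{2}\neq\mathfrak{psl}_{2}$ in characteristic $2$ is exactly what makes the statement non-trivial. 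A short computation with the characteristic-$2$ identity $(u+w)^{[2]}=u^{[2]}+w^{[2]}+[u,w]$ then yields, for $v=ax+by+ch$, the clean formula $v^{[2]}=c\,v$. Consequently, if $c\neq 0$ the element $c^{-1}v$ is toral, $(c^{-1}v)^{[2]}=c^{-1}v$, and generates a copy of $\mu_{2,k}$ inside $\ker\fr_G$, which is impossible for $G$ unipotent. Hence $\Lie(G)\subseteq\langle x,y\rangle$, an abelian restricted subalgebra with vanishing $p$-operation of dimension $1$ or $2$, and the associated height-one group scheme is $\alpha_{2,k}$ or $\alpha_{2,k}\times\alpha_{2,k}$.

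For (2), observe that $\IM\fr_G\cong G/\ker\fr_G$ is again infinitesimal unipotent, so by Proposition \ref{prop: subgroups SL2} it suffices to show that it lifts to $\SL_{2,k}$. I would exploit the central extension $1\to\mu_{2,k}\to\SL_{2,k}\xrightarrow{\pi}\PGL_{2,k}\to 1$. Writing $\tilde G=\pi^{-1}(G)$, $N=\ker\fr_G$ and $\tilde N=\pi^{-1}(N)$, functoriality of the relative Frobenius shows that $\IM\fr_{\tilde G}\subseteq\SL_{2,k}^{(2)}$ surjects onto $\IM\fr_G$ with kernel $\IM\fr_{\tilde N}$. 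The crux is to prove $\IM\fr_{\tilde N}=0$, i.e. that $\tilde N$ has height one. This is where (1) enters: since $\Lie(N)\subseteq\langle x,y\rangle$ lies in the image of $d\pi\colon\mathfrak{sl}_{2,k}\to\mathfrak{pgl}_{2,k}$, whose kernel is the line $k\cdot\Id=\Lie(\mu_{2,k})$, one gets $\dim_k\Lie(\tilde N)=\dim_k\Lie(N)+1$, whence $|\tilde N|=2\,|N|=2^{\dim_k\Lie(\tilde N)}$ and $\tilde N$ is of height one. Therefore $\IM\fr_{\tilde G}\to\IM\fr_G$ is an isomorphism, exhibiting $\IM\fr_G$ as a unipotent infinitesimal subgroup scheme of $\SL_{2,k}^{(2)}\cong\SL_{2,k}$ (the twist being trivial since $\SL_2$ is defined over $\FF_2$), which by Proposition \ref{prop: subgroups SL2} is isomorphic to some $\alpha_{2^n,k}$.

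I expect the genuine obstacle to be this lifting step in (2): the point is precisely that, although $G$ itself need not lift to $\SL_{2,k}$ (which is the whole content of the paper), its Frobenius image does, because the preimage $\tilde N$ of the Frobenius kernel is of height one and so the relative Frobenius annihilates the infinitesimal $\mu_{2,k}$-obstruction. Care will be needed to justify rigorously that $\IM\fr_{\tilde G}\to\IM\fr_G$ is an isomorphism of group schemes (a surjective monomorphism of finite group schemes over a field), and the height-one count for $\tilde N$ must be stated via the equality $\dim_k\cO(\tilde N)=2^{\dim_k\Lie(\tilde N)}$, which relies essentially on the inclusion $\Lie(N)\subseteq\IM(d\pi)$ established in (1).
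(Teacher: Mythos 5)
Your proof is correct. For part (1) you take essentially the paper's route: the paper quotes the exact sequence of restricted Lie algebras $0\to\Lie(\GG_{a,k}^2)\to\mathfrak{pgl}_2\to\Lie(\GG_{m,k})\to 0$ in characteristic $2$ and concludes that $\Lie(\ker\fr_G)\subseteq\Lie(\GG_{a,k}^2)$ because $G$ is unipotent; your basis $x,y,h$, the formula $v^{[2]}=cv$, and the exclusion of toral elements are exactly a hands-on verification of that sequence and of that conclusion. For part (2) your mechanism is genuinely different, although the skeleton (pull back along $\pi\colon\SL_{2,k}\to\PGL_{2,k}$, identify $\IM\fr_G$ with a unipotent Frobenius image inside $\SL_{2,k}^{(2)}\cong\SL_{2,k}$, then invoke Proposition~\ref{prop: subgroups SL2}) is the same. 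The paper's key step is the one-line remark that $\fr_{\mu_{2,k}}$ is trivial, so that $\fr_{\tilde G}$ factors as $\tilde G\to G\xrightarrow{\ \alpha\ }\tilde G^{(2)}$; consequently $\IM\fr_{\tilde G}=\alpha(G)$ is a quotient of the unipotent group $G$, hence unipotent, hence isomorphic to some $\alpha_{2^n,k}$ by the Proposition, and it maps isomorphically onto $\IM\fr_G$ because a unipotent group scheme meets $\mu_{2,k}$ trivially. You instead identify the kernel of $\IM\fr_{\tilde G}\twoheadrightarrow\IM\fr_G$ as $\IM\fr_{\tilde N}$ with $\tilde N=\pi^{-1}(\ker\fr_G)$, and kill it by proving that $\tilde N$ has height one, via $\dim_k\Lie(\tilde N)=\dim_k\Lie(N)+1$ (which uses both $\ker(d\pi)=k\cdot\Id$, a characteristic-$2$ phenomenon, and the inclusion $\Lie(N)\subseteq\IM(d\pi)$ from part (1)) combined with the order count $|\tilde N|=2|N|=2^{\dim_k\Lie(\tilde N)}$. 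Both arguments are sound; the paper's is shorter and keeps (2) logically independent of (1), while yours makes (2) rest on (1) but isolates a sharper structural fact, namely that the preimage in $\SL_{2,k}$ of the Frobenius kernel of any unipotent subgroup scheme is of height one --- a fact consistent with, and implicitly recovered in, the paper's proof of the Theorem, where $\ker\fr_{\tilde G}$ is identified with $\ker\fr_{\SL_{2,k}}$ in the case $\dim_k\Lie G=2$.
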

\begin{proof}\leavevmode

\begin{enumerate}

\item 
We observe that in characteristic $2$ we have the following exact sequence of restricted $p$-algebras
$$
0\to \Lie(\GG_a^2) \to \mathfrak{pgl}_2 \to \Lie(\GG_m)\to 0.
$$
Therefore $\Lie(\ker \fr_G)$ is necessarily contained in $\Lie(\GG_a^2)$, which implies that
$\ker F_G$ is a subgroup scheme of $\GG_a^2$. So we are done.


\item Let $G$ be a unipotent subgroup scheme of $\PGL_{2,k}$ and let $\tilde{G}:=\pi^{-1}G$, where $\pi:\SL_{2,k}\to \PGL_{2,k}$ is the projection. Then 
we have an exact sequence
$$
1\to \mu_{2,k} \to \tilde{G}\to G \to 0
$$
which yields the following commutative diagram
$$
\xymatrix{
1\ar[r]& \mu_{2,k} \ar[r]\ar^{\fr_{\mu_{2,k}}}[d]& \tilde{G}\ar[r]\ar^{\fr_{\tilde{G}}}[d] &G \ar[r]\ar^{\fr_G}[d]& 0\\
1\ar[r]& \mu_{2,k} \ar[r]&\tilde{G}^{(p)}\ar[r] &G^{(p)} \ar[r]& 0
}
$$
where the vertical maps are the relative Frobenius.
Since the Frobenius is trivial on $\mu_{2,k}$, we get that $\fr_{G}$ factorizes as
$$
G\on{\alpha}\to \tilde{G}^{(p)} \to G^{(p)}.
$$
 Since $G$ is unipotent then $\alpha(G)$ is a unipotent subgroup of $\tilde{G}^{(p)}\In \SL_{2,k}$. Therefore  $\alpha(G)$ is isomorphic to $\alpha_{2^n}$, for some $n$. So the statement follows.

\end{enumerate}

\end{proof}

We now continue with the proof of the Theorem.
Let $G$ be a unipotent subgroup scheme of $\PGL_{2,k}$. 
Let us consider $\tilde{G}:=\pi^{-1}{G}\In \SL_{2,k}$. 
For any $n\ge 1$, we apply the Snake Lemma to this commutative diagram with exact rows
$$
\xymatrix{
1\ar[r]& \mu_{2,k} \ar[r]\ar^{\fr^n_{\mu_{2,k}}}[d]& \tilde{G}\ar[r]\ar^{\fr^n_{\tilde{G}}}[d] &G \ar[r]\ar^{\fr^n_G}[d]& 0\\
1\ar[r]& \mu_{2,k} \ar[r]&\tilde{G}^{(p^n)}\ar[r] &G^{(p^n)} \ar[r]& 0
}
$$
and we get an exact sequence
$$
0\to \mu_{2,k} \to \ker \fr^n_{\tilde{G}}\to \ker \fr^n_G \to 0,
$$
since the are no nontrivial morphisms from a unipotent group scheme to a diagonalizable group scheme. We also observe that $\ker \fr^n_{\tilde{G}}=\pi^{-1}(\ker {\fr^n_G})$.

Let us firstly suppose that $\dim_k \Lie G=2$.  Then  $\ker \fr_G$ has order $2^2$, 
so $\ker \fr_{\tilde{G}}$ has order $2^3$ and therefore it coincides with $\ker \fr_{\SL_{2,k}}$.
Now,  $\IM{\fr_G}=\IM{\fr_{\tilde{G}}}$, and, by Lemma  \ref{lem: necessary conditions}, they are both isomorphic to $\alpha_{2^n}$ for some $n\ge 0$. 
So we have the following commutative diagram with exact rows
$$
\xymatrix{
1\ar[r]& \ker {\fr_{\SL_{2,k}}} \ar[r]\ar^{\id}[d]& \tilde{G}\ar[r]\ar[d] &\alpha_{p^n} \ar[r]\ar[d]& 0\\
1\ar[r]& \ker \fr_{\SL_{2,k}}  \ar[r]&\ker \fr^{n+1}_{\SL_{2,k}} \ar^{\fr_{\SL_{2,k}}}[r] &\ker \fr^n_{\SL_{2,k}} \ar[r]& 0.
}
$$
The diagram is a pull-back since it is a morphism of extensions.
By Lemma \ref{prop: subgroups SL2}, any  unipotent subgroup  of $\ker \fr^n_{\SL_{2,k}}$ is
$$
H_{s_1,s_2,n}=\{\left( \begin{smallmatrix}  x_{11} &x_{12} \\ x_{21} & x_{11}  \end{smallmatrix} \right)\in \ker \fr^n_{\SL_{2,k}}| s_i (x_{11}-1)+s_j(x_{ij})=0, \text{ for } (i,j)=(1,2),(2,1) \}
$$
for some $[s_1:s_2]\in \PP^1(k)$.
We remark that the inverse is given by the identity.
We are going to prove that, for any $[s_1:s_2]\in \PP^1(k)$, 
$G=\fr_{\SL_{2,k}}^{-1}(H_{s_1,s_2,n})/\mu_{2,k}$
is isomorphic to $\cD_n$.  This would also prove the Theorem in the case  $\dim_k \Lie G=1$, since $\tilde{G}$ is contained in the pull-back $\fr_{\SL_{2,k}}^{-1}(H_{s_1,s_2,n})$, therefore $G$ would be a subgroup scheme of  $\fr_{\SL_{2,k}}^{-1}(H_{s_1,s_2,n})/\mu_{2,k}\simeq \cD_n$.
We firstly remark that $\fr_{\SL_{2,k}}^{-1}(H_{s_1,s_1,n})/\mu_{2,k}$ has order $p^{n+2}$.
The Hopf algebra of $\fr_{\SL_{2,k}}^{-1}(H_{s_1,s_2,n})$ is 
%
%
$k [X_{ij}]_{1\le i,j\le 2}$ quotiented by the ideal
$$
\bigg((X_{ii}-1)^{2^{n+1}},X_{ij}^{2^{n+1}},(X_{ii}-X_{jj})^{2},X_{{ii}}X_{jj}-X_{ij}X_{ji}-1, s_i (X_{ii}^2-1)+s_j(X_{ij}^2))\bigg)_{i\neq j}
$$
and the comultiplication is induced by
$$
X_{ij}\mapsto X_{i1}X_{1j}+X_{i2}X_{2j}
$$
for any $1\le i,j \le 2$.
Now we compute the invariant ring by the natural action of $\mu_{2,k}$.
 We let $Y_{ijkl}=\overline{X_{ij}X_{kl}}$. 
Consider the subalgebra $A$ of  $k[\fr_{\SL_{2,k}}^{-1}(H_{s_1,s_2})]$ generated by $Y_{1112}$ and $Y_{2122}$.  
%
%
We suppose $s_1\neq 0$. If not, $s_2\neq 0$ and a similar argument works.
Looking at the comultiplication we have that
\begin{align*}
Y_{1112}\mapsto & (\overline{X_{11}}\otimes \overline{X_{11}}+\overline{X_{12}}\otimes \overline{X_{21}})(\overline{X_{11}}\otimes \overline{X_{12}}+\overline{X_{12}}\otimes \overline{X_{22}})=\\ &Y_{1111}\otimes Y_{1112}+ Y_{1112}\otimes Y_{1122}+ Y_{1112}\otimes Y_{1221}+ Y_{1212}\otimes Y_{2122}=\\
&Y_{1111}\otimes Y_{1112}+ Y_{1112}\otimes( Y_{1122}+ Y_{1221})+ Y_{1112}^2\otimes s_1Y_{2122}=\\
&Y_{1112}\otimes 1+1\otimes Y_{1112}+    Y_{1112}^2\otimes(   s_1Y_{2122}+s_2Y_{1112})
\end{align*}

and

\begin{align*}
Y_{2122}\mapsto &(\overline{X_{21}}\otimes \overline{X_{11}}+\overline{X_{22}}\otimes \overline{X_{21}})(\overline{X_{21}}\otimes \overline{X_{12}}+\overline{X_{22}}\otimes \overline{X_{22}})=\\&Y_{2121}\otimes Y_{1112}+ Y_{2122}\otimes Y_{1122}+Y_{2122}\otimes Y_{1221}+ Y_{2222}\otimes Y_{2122}= \\
&Y_{2121}\otimes Y_{1112}+ Y_{2122}\otimes (Y_{1122}+ Y_{1221})+ Y_{2222}\otimes Y_{2122}=\\
&Y_{2212}\otimes 1+ 1\otimes Y_{2212}+ Y_{1112}^2\otimes (s_2 Y_{2122}+\frac{s_2^2}{s_1}Y_{1112})
\end{align*}
where we used that $Y_{1122}+Y_{1221}=1, s_1Y_{1112}^2=Y_{1212}$, $s_1Y_{2121}=s_2^2Y_{1112}^2$ and $Y_{2222}=Y_{1111}=1+s_2Y_{1112}^2$, which are easy to verify. The inverse map is the identity, so $A$ is an Hopf algebra contained in $k[(\fr_{SL_2}^{-1}(H_{s_1,s_2})]^{\mu_{2,k}}$.
We also stress that
$$
(s_1 Y_{2122}+s_2 Y_{1112}) \mapsto (s_1 Y_{2212}+s_2 Y_{1112}) \otimes 1 +1\otimes (s_1 Y_{2212 }+s_2 Y_{1112})
$$
and $s_1 Y_{2122}+s_2 Y_{1112}$ is not zero. Indeed for example the element $\left(\begin{smallmatrix}  1 &0  \\ a & 1  \end{smallmatrix} \right)$, with $a^2=0$ and $a\neq 0$, belongs to $\fr_{SL_2}^{-1}(H_{s_1,s_2})$.
This means that $s_1 Y_{2122}+s_2 Y_{1112}$ generates an Hopf algebra $B$ isomorphic to $\alpha_{2,k}$.
Therefore there is an epimorphism $H=\spec A \to \spec B$. The kernel of this map is the spectrum of the algebra generated by the class of $Y_{1112}$ modulo  $s_1 Y_{2122}+s_2 Y_{1112}$. This group scheme is isomorphic to $\alpha_{2^{n+1}}$, since $Y_{1112}^{2^{n}}=X_{11}^{2^n}Y_{12}^{2^n}\neq 0$. Here we use that $s_1 \neq 0$. If $s_1=0$, we would have that $Y_{2212}^{2^{n}} \neq 0$.
So $H$ has order $2^{n+2}$. Since it has the same order of $\fr_{SL_2}^{-1}(H_{s_1,s_2,n})$, the natural map $\fr_{SL_2}^{-1}(H_{s_1,s_2},n) \to H$ is an isomorphism.
Moreover $H$ is isomorphic to $\cD_n$ via the isomorphism
$$
\overline{S}\mapsto s_1 Y_{2122}+s_2 Y_{1112}
$$
and
$$
\overline{T}\mapsto Y_{2122}.
$$
Finally let us suppose that  $k$ is a perfect field. Then any trigonalizable group scheme $T$ is a semidirect product $U\rtimes D$, where $U$ is unipotent and $D$ is diagonalizable. If $T$ is infinitesimal and a closed subgroup scheme of $\PGL_{2,k}$ then, by \cite[Corollary 6.4]{gouthier2023infinitesimal}, $D\simeq \mu_{2^l,k}$, for some $l\ge 1$, $U$ is trivial or  the action of $\mu_{2^l,k}$ on $U$ is nontrivial. Therefore, by the first part of the Theorem and by the Lemma \ref{lem:isomorphisms H_al}$(3)$, $U$ is isomorphic to $\cD_n$ or to $\alpha_{2^n,k}$, for some $n\ge 1$.




\section{Actions of trigonalizable group schemes over $\PP^1_k$} \label{sec: trigonalizable}
In the Theorem we prove that if $k$ is a perfect field of characteristic $2$, any trigonalizable, not unipotent nor diagonalizable, infinitesimal subgroup scheme of $\PGL_{2,k}$ is a semi-direct product of $\mu_{2^l,k}$ by $\cD_n$ or $\alpha_{2^n}$, for some $n,l\ge 0$. 
In this section, we say something more about which semi-direct products can appear.

\begin{prop}
Let $k$ be a field of characteristic $2$.
There exists a nontrivial action, as an automorphism of groups, of $\GG_{m,k}$ on 
$\cE$ such that 
$\cE\rtimes \GG_{m,k}$ acts faithfully on $\PP^1_k$. Moreover, we can choose the action of $\GG_{m,k}$ such that, for any $n\ge 0$, it preserves $\cD_{n}$ and $\cH_{0,n}$. We then obtain, by restriction, faithful actions of the induced semi-direct products $\cD_n\rtimes \mu_{2^l,k}$ and $\cH_{0,n}\rtimes \mu_{2^l,k}$ over $\PP^{1}_k$, for any $l\ge 1$.
\end{prop}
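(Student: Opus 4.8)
The plan is to realize $\cE \rtimes \GG_{m,k}$ explicitly as a subgroup scheme of $\PGL_{2,k} = \aut(\PP^1_k)$, built from three families of matrices read modulo scalars: the upper unipotent $g_t = \left(\begin{smallmatrix} 1 & t \\ 0 & 1\end{smallmatrix}\right)$ for $t \in \GG_{a,k}$, the \emph{lower} infinitesimal $h_s = \left(\begin{smallmatrix} 1 & 0 \\ s & 1\end{smallmatrix}\right)$ for $s \in \alpha_{2,k}$ (so $s^2=0$), and the diagonal $d_u = \left(\begin{smallmatrix} u & 0 \\ 0 & 1\end{smallmatrix}\right)$ for $u \in \GG_{m,k}$. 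The crucial input is the conjugation relation in $\PGL_{2,k}$: since $s^2=0$ and $\mathrm{char}\,k = 2$, a direct computation gives $h_s g_t h_s^{-1} = \left(\begin{smallmatrix} 1+st & t \\ 0 & 1+st\end{smallmatrix}\right)$, which modulo the scalar $1+st$ equals $g_{t+st^2}$. Thus conjugation by $h_s$ acts on the additive parameter by $t \mapsto t + st^2$, precisely the action $a\cdot b = b + ab^2$ defining $\cE$. This is exactly where lifting to $\SL_{2,k}$ breaks down: the identity $h_s g_t h_s^{-1} = g_{t+st^2}$ holds only after passing to $\PGL_{2,k}$, the diagonal factor $1+st$ being nontrivial in $\SL_{2,k}$.

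Next I would record the map $\rho\colon \cE\rtimes\GG_{m,k} \to \PGL_{2,k}$, $((t,s),u)\mapsto g_t h_s d_u$, and check it is a homomorphism onto $\langle \GG_{a,k}, \alpha_{2,k}, \GG_{m,k}\rangle$. The relation above yields $g_{t_1}h_{s_1}\cdot g_{t_2}h_{s_2} = g_{t_1+t_2+s_1 t_2^2}\,h_{s_1+s_2}$, matching the group law of $\cE$; and the torus relations $d_u g_t d_u^{-1} = g_{ut}$, $d_u h_s d_u^{-1} = h_{u^{-1}s}$ show that $\GG_{m,k}$ acts on $\cE$ by the \emph{nontrivial} automorphism $(t,s)\mapsto(ut,u^{-1}s)$, compatible with $a\cdot b = b+ab^2$. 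Faithfulness is then immediate: from $g_t h_s d_u = \left(\begin{smallmatrix} (1+ts)u & t \\ su & 1\end{smallmatrix}\right)$, requiring this matrix to be scalar forces $t=0$, then $su=0$ hence $s=0$, and finally $u=1$. So $\rho$ is a monomorphism and $\cE\rtimes\GG_{m,k}$ acts faithfully on $\PP^1_k$.

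It then remains to verify the invariance claims and restrict the torus. The chosen action scales the additive coordinate by $t\mapsto ut$, hence preserves every Frobenius kernel of $\GG_{a,k}$, since $t^{2^n}=0$ implies $(ut)^{2^n}=u^{2^n}t^{2^n}=0$; it also preserves $\alpha_{2,k}$ via $s\mapsto u^{-1}s$. Identifying $\cD_n = \alpha_{2^n,k}\rtimes\alpha_{2,k}$ with $\alpha_{2^n,k}=\ker\fr^n_{\GG_{a,k}}$ sitting in $\GG_{a,k}$, and $\cH_{0,n}$ with the additive subgroup $\alpha_{2^n,k}\subset\GG_{a,k}$ (the kernel of the projection $\cD_n\to\alpha_{2,k}$), one sees $\GG_{m,k}$ preserves both. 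Restricting to $\mu_{2^l,k}\subset\GG_{m,k}$ for $l\ge 1$ exhibits $\cD_n\rtimes\mu_{2^l,k}$ and $\cH_{0,n}\rtimes\mu_{2^l,k}=\alpha_{2^n,k}\rtimes\mu_{2^l,k}$ as subgroup schemes of $\cE\rtimes\GG_{m,k}$, with nontrivial $\mu_{2^l,k}$-action, so they act faithfully on $\PP^1_k$.

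The main obstacle is the content of the first two paragraphs: pinning down that the subgroup generated by the upper $\GG_{a,k}$ and the \emph{lower} $\alpha_{2,k}$ is genuinely isomorphic to $\cE$ \emph{with the prescribed action} $a\cdot b = b+ab^2$. This rests entirely on the characteristic-$2$, $s^2=0$ simplification of the $\PGL_{2,k}$-conjugation, together with confirming that the generated subfunctor closes up to the abstract semidirect product rather than to a larger subgroup (the monomorphism check of $\rho$ handles this). Once that identification is secured, the $\GG_{m,k}$-invariance of $\cD_n$ and $\cH_{0,n}$ and the passage to $\mu_{2^l,k}$ are routine.
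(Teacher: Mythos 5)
Your proposal is correct, and in fact it constructs \emph{literally the same} subgroup of $\PGL_{2,k}$ as the paper, just packaged differently. The M\"obius transformation attached to your matrix $g_t h_s d_u=\left(\begin{smallmatrix} (1+ts)u & t \\ su & 1\end{smallmatrix}\right)$ is
$x\mapsto \frac{(1+ts)ux+t}{sux+1}=(1+ts)ux+t+su^2x^2(1+ts)+stux = ux+su^2x^2+t$ (using $s^2=0$ and characteristic $2$), which is exactly the coaction $X\mapsto VX+V^2\overline{S}X^2+T$ that the paper takes as its starting point. The difference is in the verification route: the paper defines the action by this coaction on $\AA^1_k$, checks by an explicit computation of the image of $1/X$ that it glues to an action on $\PP^1_k$, and reads off faithfulness; you instead work entirely inside $\PGL_{2,k}$ with the generators $g_t,h_s,d_u$, derive the semidirect-product structure from the conjugation relations (the key identity $h_s g_t h_s^{-1}=g_{t+st^2}$, valid only modulo scalars, which is precisely the non-liftability to $\SL_{2,k}$ that drives the paper), and get faithfulness from the triviality of the kernel of $\rho$ via the scalar-matrix criterion. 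Your route buys the extension to $\PP^1_k$ for free (matrices modulo scalars are automorphisms of $\PP^1_k$ by definition, so no gluing computation is needed) and makes the group-theoretic structure transparent; the paper's route produces the explicit formula for the action on the coordinate $X$, which it reuses immediately afterwards (e.g.\ setting $V=1$ in the Remark to get explicit actions of all the unipotent subgroups). Your checks of the $\GG_{m,k}$-invariance of $\cD_n$ and $\cH_{0,n}$ and the restriction to $\mu_{2^l,k}$ agree with the paper's and are complete.
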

\begin{proof}
    
Let us consider the action of $\GG_{m,k}=\spec k[V,V^{-1}]$ over $\cE$  given by $v\cdot (s,t)=(v^{-1}s,vt)$.
Then we define the action of $\cE\rtimes \GG_{m,k}$ over $\AA^1_k=\spec k[X]$ given by the coaction
$$
X\longmapsto V X +V^2 \overline{S}X^2+T.
$$
And we get
 $$
 \frac{1}{X}\longmapsto \frac{1}{T+VX}\Big(1+\frac{V^2  \overline{S}X^2}{T+ {V}X}\Big)= \overline{S}+\frac{1}{T+VX}+ \frac{T^2\overline{S}}{(T+VX)^2}\in k[X,T,V^{\pm 1}, (T+VX)^{-1}] [S]/(S^2),
 $$
 Therefore, by gluing,  we get a faithful action of  $\cE\rtimes \GG_{m,k}$ over $\PP^1_k$.

\end{proof}

    
\begin{rema}
\begin{enumerate1} \leavevmode
 \item The group scheme $\ker{\fr_{\PGL_{2,k}}}$ is isomorphic to the semidirect product $\alpha_{2,k}^2\rtimes \mu_{2,k}$, where the action is given by $v\cdot (s,t)=(vs,vt)$. Indeed it coincides with the kernel of the Frobenius of the group scheme constructed in the Proposition.
\item We remark that taking $V=1$ in the proof of the Proposition, we get an explicit action of any unipotent infinitesimal group scheme over $\PP^1_k$, since, as proved in the Theorem, they are all subgroup schemes of $\cE$. 

\end{enumerate1}
\end{rema}

We finally mention the following Lemma.
\begin{lemm}
Let $k$ be a perfect field of characteristic $2$.
Up to isomorphism, there is a unique infinitesimal trigonalizable subgroup scheme of $\PGL_{2,k}$ extension of $\mu_2$ by $\alpha_{2^n}$, for any $n>0$. 
\end{lemm}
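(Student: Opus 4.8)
The plan is to reduce the problem to classifying the semidirect products $\alpha_{2^n,k}\rtimes\mu_{2,k}$ and then to invoke the rigidity of nontrivial $\mu_2$-actions already established. Since $k$ is perfect, I would first use the structure result recalled in the proof of the Theorem: every infinitesimal trigonalizable group scheme splits as $U\rtimes D$ with $U$ unipotent and $D$ diagonalizable. Applying this to a $G$ sitting in $1\to\alpha_{2^n,k}\to G\to\mu_{2,k}\to 1$, I would pin down the two factors. Because there is no nontrivial homomorphism between a unipotent and a diagonalizable group scheme over a field, the composite $U\to G\to\mu_{2,k}$ vanishes, so $U\subseteq\alpha_{2^n,k}$; dually the composite $\alpha_{2^n,k}\to G\to D$ vanishes, so $\alpha_{2^n,k}\subseteq U$. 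Hence $U=\alpha_{2^n,k}$ and $D=G/U\cong\mu_{2,k}$, which identifies $G$ with some semidirect product $\alpha_{2^n,k}\rtimes_{\phi}\mu_{2,k}$.

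Next I would argue that the action $\phi$ is nontrivial. If it were trivial, $G$ would be the commutative group scheme $\alpha_{2^n,k}\times\mu_{2,k}$; base-changing to $\overline{k}$ and comparing with the list in the Corollary, which for $n\ge 1$ contains no commutative subgroup scheme of $\PGL_{2,\overline{k}}$ having simultaneously a nontrivial multiplicative and a nontrivial unipotent part, yields a contradiction. (Equivalently, part $(2)$ of the Theorem only produces nontrivial actions.) Thus $\phi$ is nontrivial.

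For uniqueness I would appeal to Lemma~\ref{lem:isomorphisms H_al}$(1)$: for $n\ge 2$ it asserts precisely that any two semidirect products of $\alpha_{2^n,k}$ by $\mu_{2,k}$ along nontrivial actions are isomorphic, so $G$ is determined up to isomorphism. For the boundary value $n=1$, not covered by the stated hypotheses of that Lemma, the same conclusion is immediate: $\cA ut_{gr}(\alpha_{2,k})\cong\GG_{m,k}$ and the only nontrivial homomorphism $\mu_{2,k}\to\GG_{m,k}$ is the canonical inclusion, leaving a single nontrivial action $v\cdot t=vt$. Finally, for existence I would invoke the Proposition of Section~\ref{sec: trigonalizable}: since $\cH_{0,n}\cong\alpha_{2^n,k}$ by Lemma~\ref{lem:commutative subgroups}, taking $l=1$ there gives a faithful action of $\alpha_{2^n,k}\rtimes\mu_{2,k}$ on $\PP^1_k$, exhibiting at least one such subgroup scheme of $\PGL_{2,k}$.

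The main obstacle is the bookkeeping of the second step---correctly extracting the unipotent and multiplicative factors from the extension---together with ensuring the action-classification lemma covers every $n>0$, in particular the case $n=1$ that falls outside its stated range; the remaining inputs are all available from the results proved above.
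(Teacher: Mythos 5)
Your proof is correct and follows essentially the same route as the paper, whose one-line proof invokes the Theorem together with Lemma~\ref{lem:isomorphisms H_al}$(1)$ (with existence supplied, as you note, by the Proposition of Section~\ref{sec: trigonalizable}). You are in fact slightly more careful than the paper: you observe that the stated hypotheses of Lemma~\ref{lem:isomorphisms H_al} exclude $n=1$ and patch that case directly via $\Hom(\mu_{2,k},\GG_{m,k})\cong \ZZ/2\ZZ$, a point the paper's citation glosses over.
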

\begin{proof}
    This follows by the Theorem and by Lemma \ref{lem:isomorphisms H_al}$(1)$.
\end{proof}

\bibliographystyle{amsalpha}
\bibliography{bib}

\vspace{2cm}

\noindent Institut de Math\'ematiques de Bordeaux, 351 Cours de la Lib\'eration, 33405 Talence, France \\ {Email Addresses:} \texttt{bianca.gouthier@math.u-bordeaux.fr},
 \texttt{dajano.tossici@math.u-bordeaux.fr}
\\
\end{document}